\newcommand{\be}{\begin{equation} }
\newcommand{\ee}{\end{equation} }
\newcommand{\bee}{\begin{eqnarray} }
\newcommand{\eee}{\end{eqnarray} }
\numberwithin{equation}{section}
\newtheorem{Lem}{Lemma}
\newtheorem*{thm}{Theorem}
\newtheorem{Thm}{Theorem}
\newtheorem{definition}{Definition}
\newtheorem{remark}{Remark}
\newcommand{\set}[1]{\ensuremath{\{#1\}}}
\newcommand{\E}[1]{\ensuremath{\mathbb E\left[ #1 \right]}}
\newcommand{\EE}[1]{\mathbb E}
\newcommand{\ep}{\ensuremath{\epsilon}}
\newcommand{\C}{\ensuremath{\mathbb C}}
\newcommand{\W}{\ensuremath{\Omega}}
\newcommand{\twiddle}[1]{\ensuremath{\widetilde{#1}}}
\newcommand{\setst}[2]{\ensuremath{\left\{#1\,\middle|\,#2\right\}}}
\newcommand{\abs}[1]{\left\lvert #1 \right\rvert}
\newcommand{\lr}[1]{\ensuremath{\left(#1\right)}}
\newcommand{\dell}{\ensuremath{\partial}}
\newcommand{\und}[1]{\underline{#1}}
\begin{document}
\author{Boris Hanin}

\address{Department of Mathematics, MIT, Cambridge, MA 02142, USA}
\email[B. Hanin]{bhanin@mit.edu}

\thanks{Research partially supported by NSF grant DMS-1400822.}

\title[Pairing of Zeros and Critical Points]{Pairing of Zeros and Critical Points for Random Polynomials}
\setcounter{section}{-1}
\maketitle
\begin{abstract}
Let $p_N$ be a random degree $N$ polynomial in one complex variable whose zeros are chosen independently from a fixed probability measure $\mu$ on the Riemann sphere $S^2.$ This article proves that if we condition $p_N$ to have a zero at some fixed point $\xi\in S^2,$ then, with high probability, there will be a critical point $w_\xi$ a distance $N^{-1}$ away from $\xi.$ This $N^{-1}$ distance is much smaller than the $N^{-1/2}$ typical spacing between nearest neighbors for $N$ i.i.d. points on $S^2.$ Moreover, with the same high probability, the argument of $w_\xi$ relative to $\xi$ is a deterministic function of $\mu$ plus fluctuations on the order of $N^{-1}.$
\end{abstract}

\section{Introduction}
This article concerns a surprising relationship between zeros and critical points of a random polynomial in one complex variable. To introduce our results, consider Figure 1, which shows the zeros and critical points of $p(z)=z^9-1$ and of $q(z)=p(z)(z-\xi)$ for various $\xi.$ While the zeros and critical points of $p$ are quite far apart, most zeros of $q$ seem to have a unique nearby critical point. This effect becomes more pronounced for a polynomial whose zeros are chosen at random as in Figure 2. What accounts for such a pairing? How close is a zero to its paired critical point? Why does the pairing break down in some places? Why is there such a rigid angular dependence between a zero and its paired critical point? We give in \S \ref{S:Electrostatics} intuitive answers to these questions using an interpretation of zeros and critical points that relies on electrostatics on the Reimann sphere $S^2.$ This physical heuristic, in turn, guides the proofs of our main results, Theorems \ref{T:SCor} and \ref{T:Main}.

\begin{figure}
 \label{fig:1}
   \centering
     \includegraphics[width=0.32\textwidth]{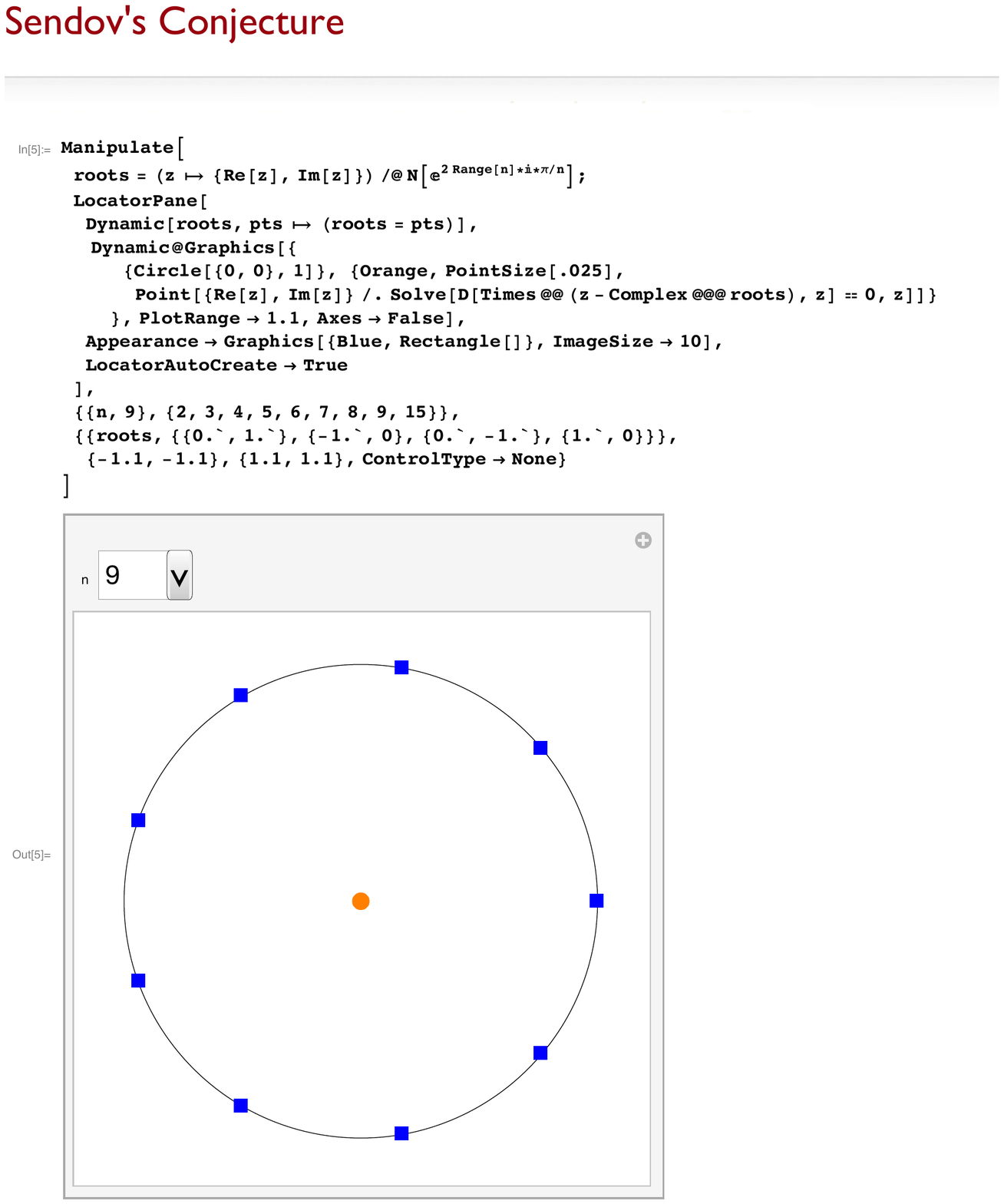}
     \includegraphics[width=0.32\textwidth]{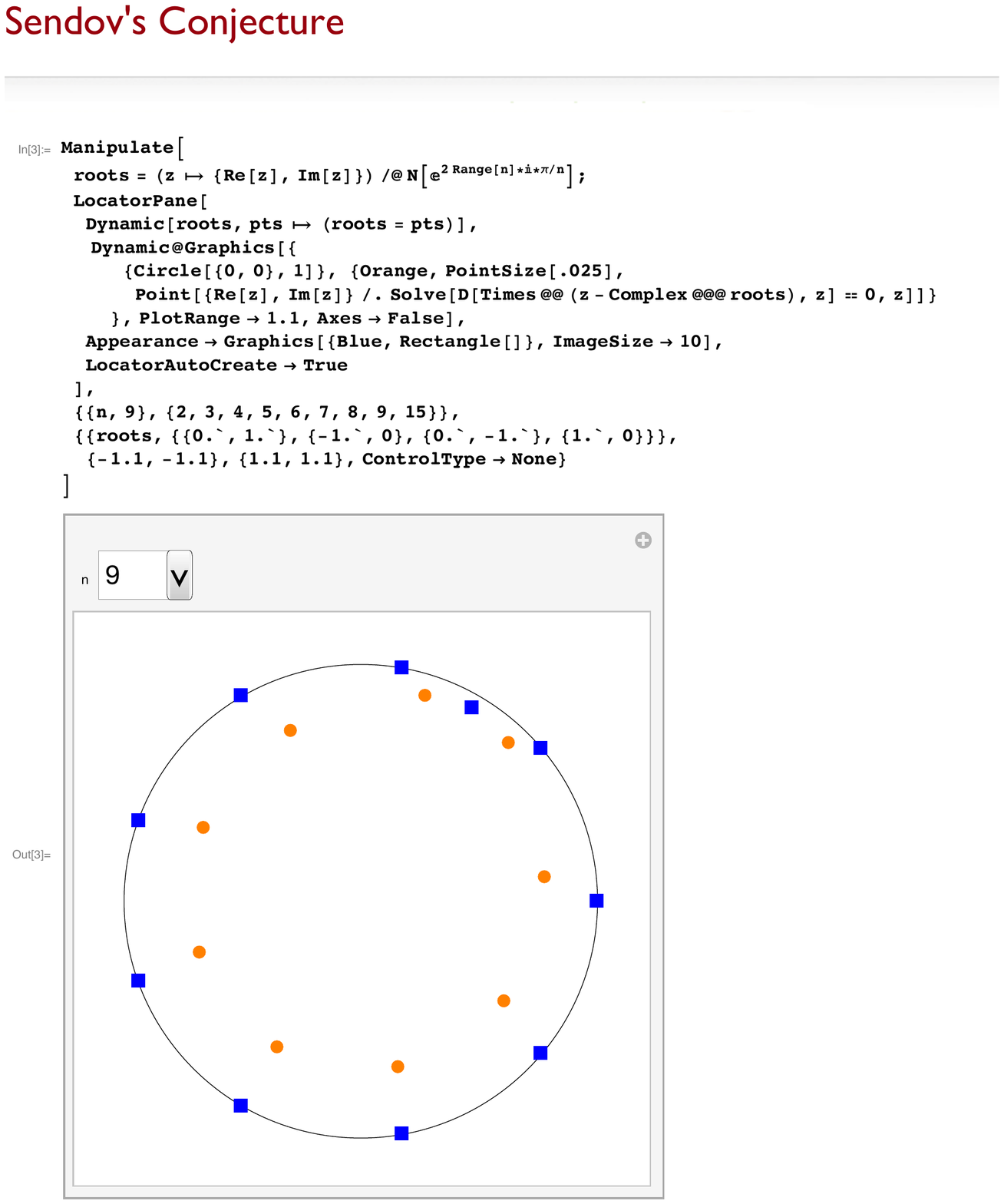}
  \includegraphics[width=0.32\textwidth]{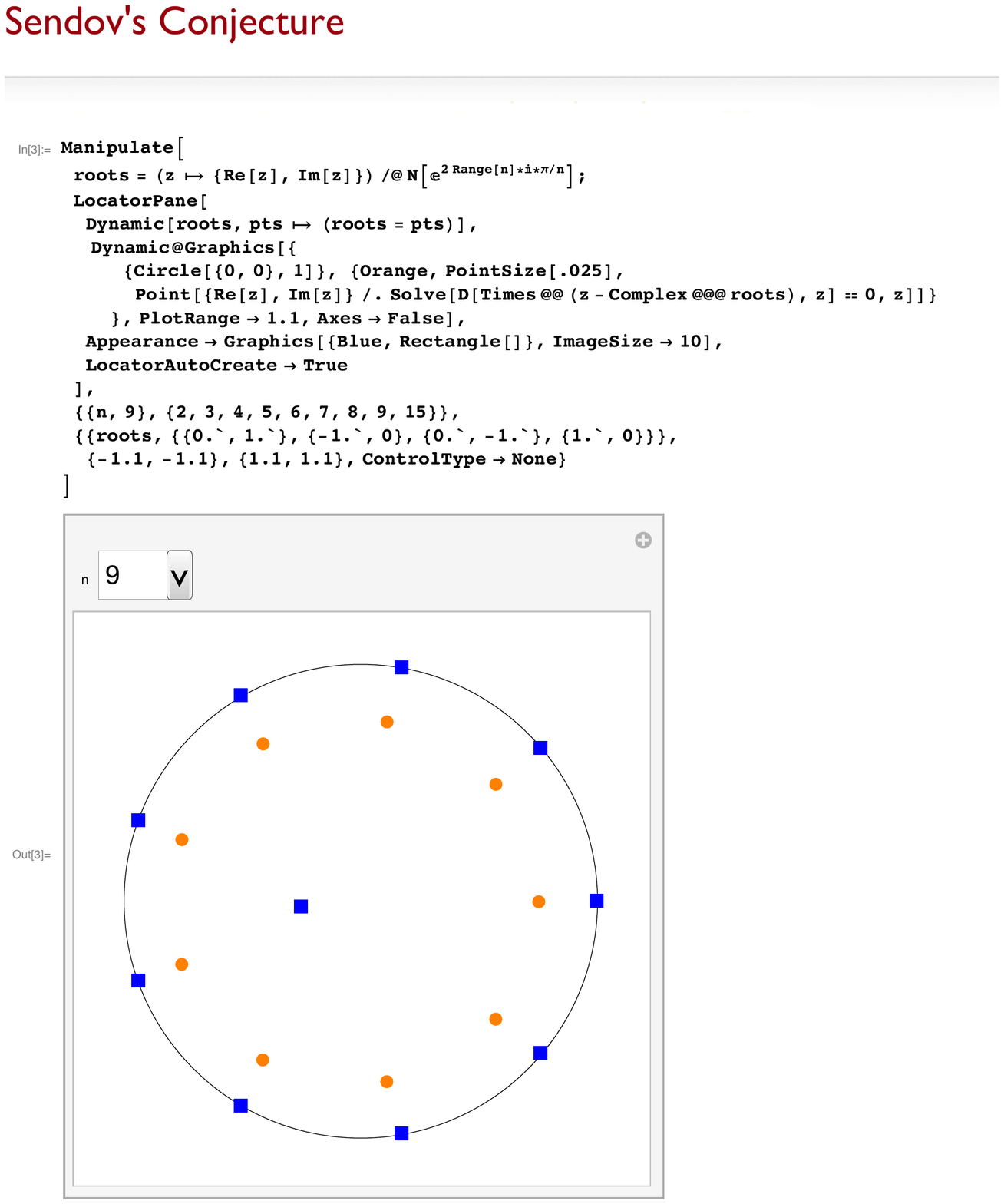}
    \caption{Figures from left to right display zeros (blue squares) and critical points (orange disks) of $p(z)=z^9-1$ and of $q(z)=p(z)(z-\xi)$ for various $\xi.$}
\end{figure}

\begin{figure}
 \label{fig:2}
   \centering
     \includegraphics[width=0.32\textwidth]{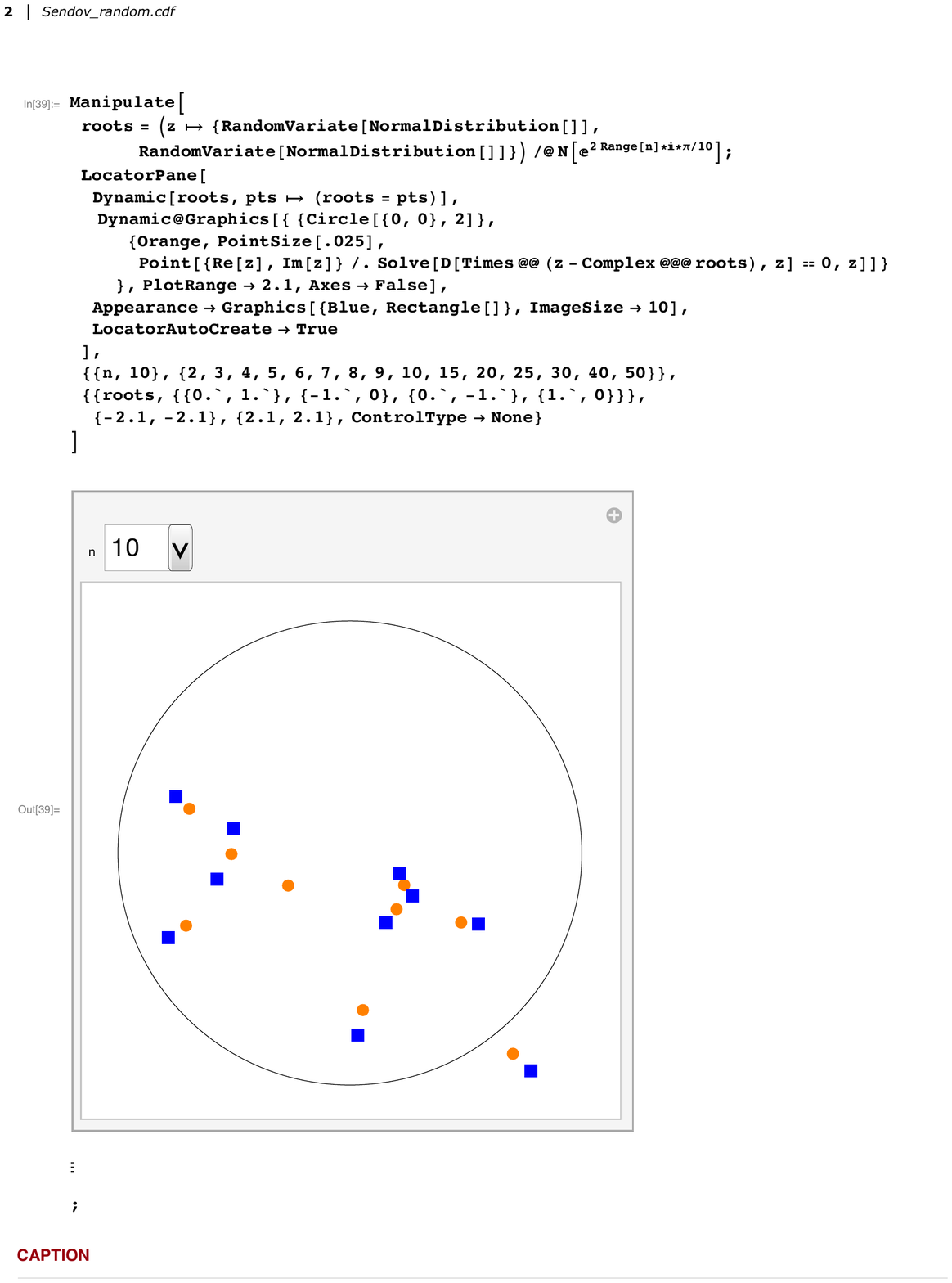}
     \includegraphics[width=0.32\textwidth]{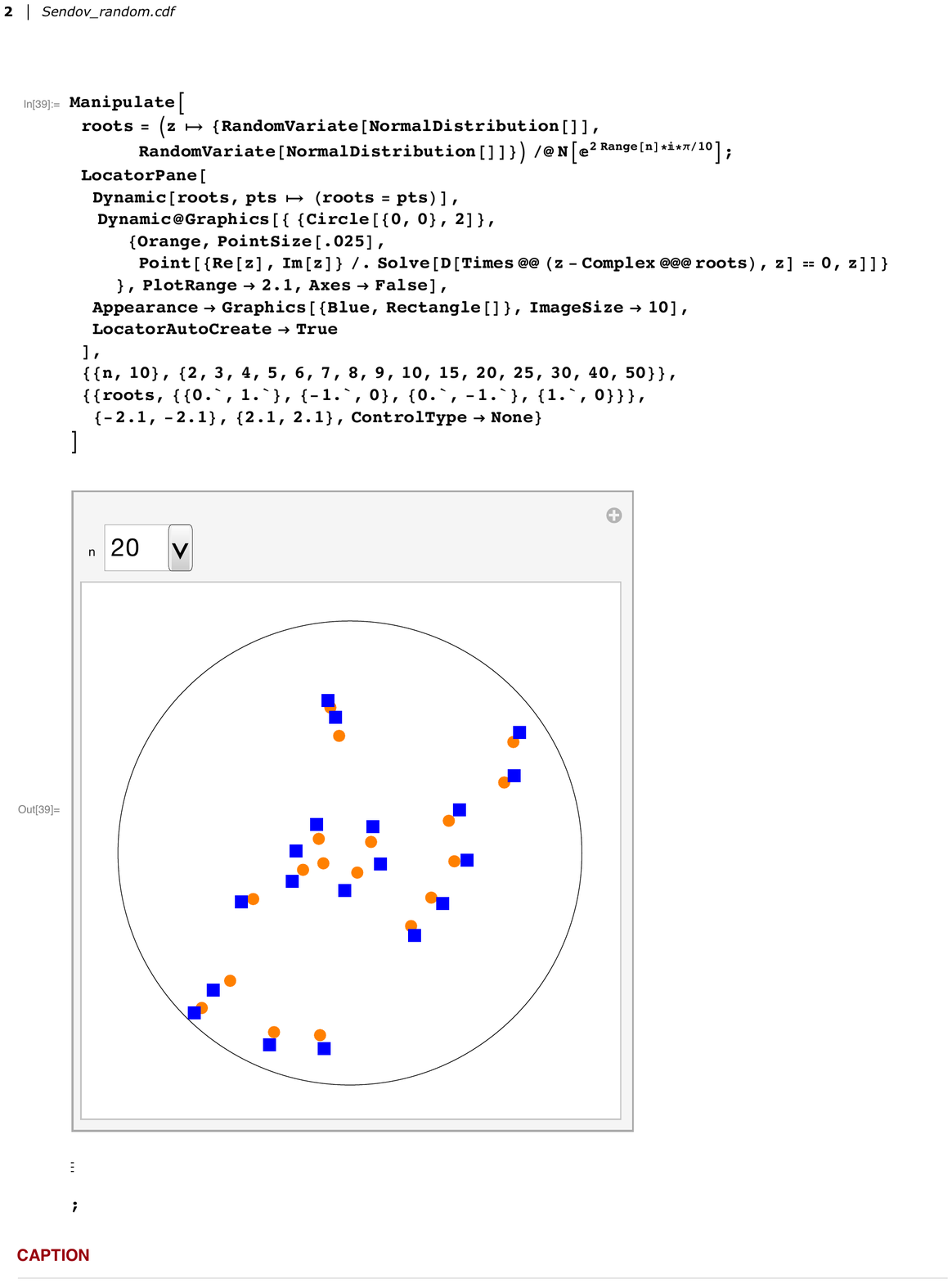}
  \includegraphics[width=0.32\textwidth]{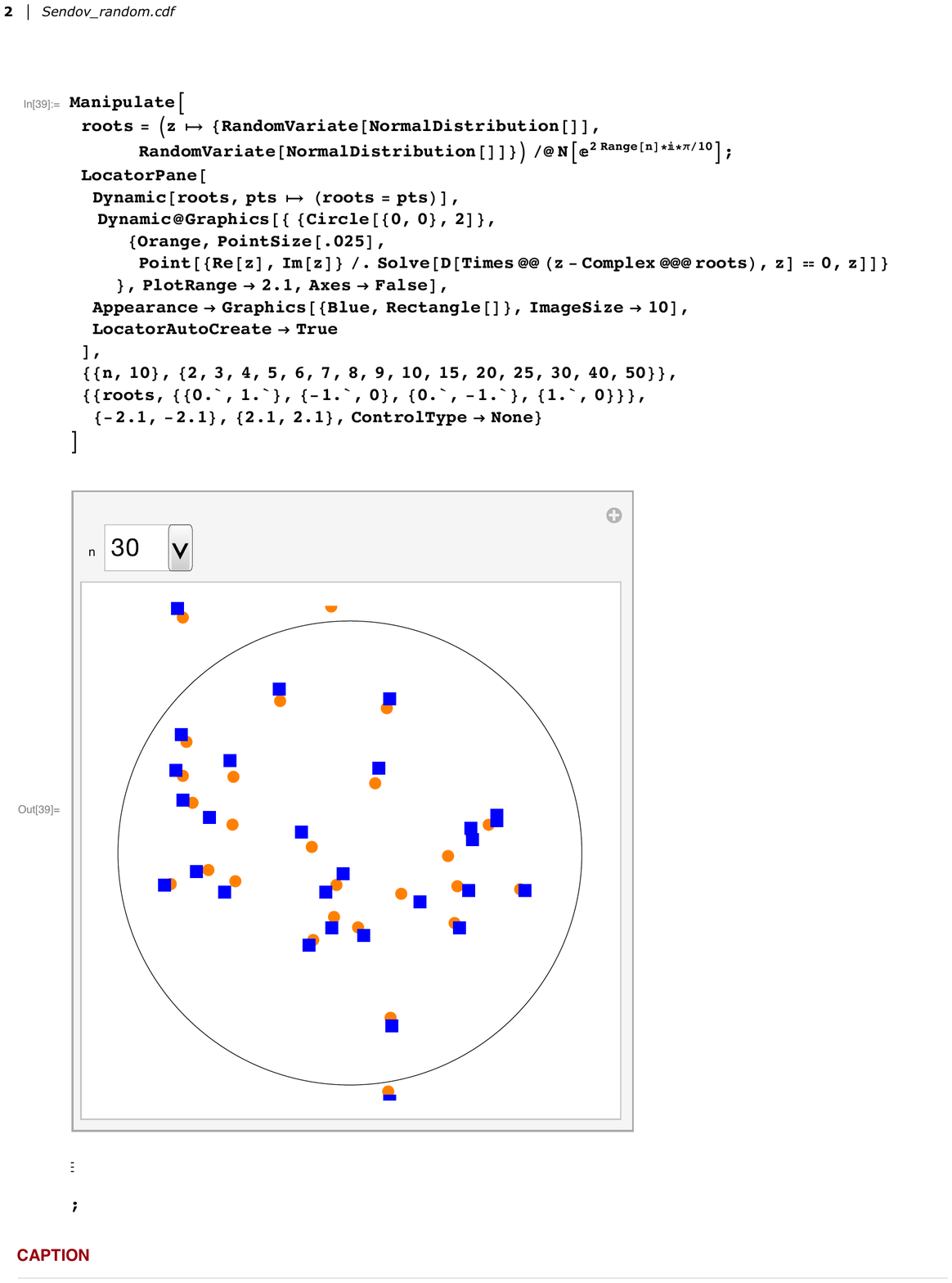}
    \caption{Figures from left to right display zeros (blue squares) and critical points (orange disks) of a polynomial with $10,20,30$ zeros each chosen independently from the standard Gaussian measure on $\C.$}
\end{figure}

There is a vast literature on the distribution of zeros of random polynomials, and we will not attempt to survey it here. We simply mention that they have been studied from the point of view of random analytic functions (cf \cite{S}); random matrix theory, where eigenvalues are zeros of characteristic polynomials of random matrices (cf \cite{AGZ}); and determinantal point processes (cf \cite{HKPV}). Previous results specifically relating zeros and critical points of random polynomials are much more limited, however. This is somewhat surprising because there are many interesting deterministic theorems that restrict the possible locations of critical points of a polynomial in terms of the locations of its zeros. We recall two such results, and refer the reader to Marden's book \cite{M} for many more.
\begin{thm}[Gauss-Lucas]
The critical points of a polynomial in one complex variable lie inside the convex hull of its zeros.
\end{thm}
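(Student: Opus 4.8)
The plan is to deduce the theorem from the elementary fact that any point expressible as a convex combination of $z_1,\dots,z_N$ lies in their convex hull. Write $p(z)=c\prod_{k=1}^{N}(z-z_k)$, where $c\neq 0$ and $z_1,\dots,z_N\in\C$ are the zeros of $p$ listed with multiplicity, and let $w$ be a critical point, $p'(w)=0$. If $w$ coincides with one of the $z_k$ there is nothing to prove, so assume $w\notin\{z_1,\dots,z_N\}$. On the complement of the zero set the logarithmic derivative is well defined, and
\[
0=\frac{p'(w)}{p(w)}=\sum_{k=1}^{N}\frac{1}{w-z_k}.
\]

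The key manipulation is to take complex conjugates of this identity and rewrite each summand via $\overline{(w-z_k)^{-1}}=(w-z_k)/\abs{w-z_k}^{2}$. Setting $\lambda_k:=\abs{w-z_k}^{-2}>0$, this yields $\sum_{k=1}^{N}\lambda_k(w-z_k)=0$, hence
\[
w=\frac{\sum_{k=1}^{N}\lambda_k z_k}{\sum_{k=1}^{N}\lambda_k}.
\]
Since the weights $\lambda_k/\sum_{j}\lambda_j$ are strictly positive and sum to $1$, this exhibits $w$ as a convex combination of the zeros, so $w$ lies in their convex hull, which is the assertion.

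There is no real obstacle in this argument; the only point requiring a word of care is the degenerate case in which a critical point is also a zero (a repeated root of $p$), where the logarithmic-derivative identity is unavailable but the conclusion is immediate. A variant that sidesteps even this case is to argue the contrapositive: if $w$ lies strictly outside the convex hull, the separating hyperplane theorem provides a closed half-plane containing all the $z_k$ but not $w$, and after a rotation and translation we may assume $\Re(w-z_k)>0$ for every $k$; then $\Re\big((w-z_k)^{-1}\big)=\Re(w-z_k)/\abs{w-z_k}^{2}>0$ for each $k$, so $\Re\sum_{k=1}^{N}(w-z_k)^{-1}>0$ and in particular $p'(w)\neq 0$. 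I would present the convex-combination computation as the main proof and relegate the half-plane version to a remark.
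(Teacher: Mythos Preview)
Your proof is correct. Both you and the paper begin from the logarithmic-derivative identity
\[
\frac{p'(w)}{p(w)}=\sum_{k=1}^{N}\frac{1}{w-z_k}=0,
\]
so the approaches are closely aligned. The difference is in how the conclusion is drawn: the paper interprets $\overline{(w-z_k)^{-1}}$ as the electric field at $w$ from a unit positive charge at $z_k$ and then argues physically that, since all charges have the same sign, the total field cannot vanish outside their convex hull---this is precisely your half-plane/separating-hyperplane variant, phrased electrostatically. Your main proof instead carries out the explicit convex-combination computation $w=\big(\sum_k \lambda_k z_k\big)/\big(\sum_k \lambda_k\big)$ with $\lambda_k=\abs{w-z_k}^{-2}$, which the paper does not write out. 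What you gain is a fully self-contained algebraic argument that also handles the repeated-root case cleanly; what the paper's presentation buys is the electrostatic picture that it then reuses throughout \S\ref{S:Electrostatics} to motivate the pairing phenomenon.
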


\begin{thm}[Theorem 3.55 in \cite{Tit}]
Let $f$ be a non-zero holomorphic function on a simply connected domain $\W\subseteq \C,$ and take $\Gamma$ to be a smooth closed connected component of the level set $\set{\abs{f(z)}=t}$ for some $t.$ Write $U=\set{\abs{f(z)}< t}$ for the open domain bounded by $\Gamma.$ Then 
\[\#\set{ f(z)=0}\cap U= \lr{\#\set{\frac{d}{dw}f(w)=0}\cap U}+1.\] 
\end{thm}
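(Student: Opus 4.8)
The plan is to express both zero-counts as winding numbers via the argument principle and then to compare them directly along $\Gamma$; here $\#$ is understood with multiplicity, which is precisely what the argument principle yields. First I would record some basic facts about $\Gamma$. Since $\abs{f}=t>0$ on $\Gamma$, the function $f$ has no zero there. Moreover $f'$ has no zero on $\Gamma$: if $f'$ vanished to order $k-1\geq 1$ at a point $z_0\in\Gamma$, then the expansion $\abs{f(z)}^2-\abs{f(z_0)}^2=2\Re\bigl(\overline{f(z_0)}\,f^{(k)}(z_0)(z-z_0)^k/k!\bigr)+O(\abs{z-z_0}^{k+1})$ shows that the connected component of $\set{\abs{f}=t}$ through $z_0$ consists of $2k\geq 4$ smooth arcs emanating from $z_0$, contradicting the smoothness of $\Gamma$. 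Hence $\Gamma$ is a regular level curve of $\abs{f}$ (which is smooth near $\Gamma$ because $f\neq 0$ there); being closed, connected and smooth, it is a Jordan curve, and since $\Omega$ is simply connected its interior $U$ satisfies $\overline{U}\subseteq\Omega$, so $f$ is holomorphic on a neighborhood of $\overline{U}$. I orient $\Gamma$ positively, so that $U$ lies to the left. (Also $f$ is nonconstant, as otherwise $\set{\abs{f}=t}$ could not be a smooth closed curve.)

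Next I would pin down the behavior of $f/f'$ on $\Gamma$. Parametrize $\Gamma$ by arclength as $z(s)$. Since $\abs{f}$ is constant on $\Gamma$, $\Re\bigl(\tfrac{f'(z(s))}{f(z(s))}\,z'(s)\bigr)=\tfrac{d}{ds}\log\abs{f(z(s))}=0$, so $\tfrac{f'}{f}\,z'(s)$ is purely imaginary along $\Gamma$. The gradient $\nabla\abs{f}^2$ corresponds to the complex number $2\,\overline{f'(z)}\,f(z)$, and it points out of $U$ because $\abs{f}<t$ just inside $\Gamma$; hence the positively oriented unit tangent is $z'(s)=\frac{i\,\overline{f'(z(s))}\,f(z(s))}{\abs{f'(z(s))}\,t}$, and a short computation then gives $\tfrac{f'(z(s))}{f(z(s))}\,z'(s)=\frac{i\,\abs{f'(z(s))}}{t}$, which is a \emph{positive} imaginary number. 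Consequently $\frac{f(z(s))}{f'(z(s))}=\frac{-i\,z'(s)}{\psi(s)}$ with $\psi(s)=\abs{f'(z(s))}/t>0$.

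Finally I would carry out the comparison. By the argument principle, the number of zeros of $f$ in $U$ is $Z_f=\frac{1}{2\pi i}\oint_\Gamma\frac{f'}{f}\,dz$ and the number of zeros of $f'$ in $U$ is $Z_{f'}=\frac{1}{2\pi i}\oint_\Gamma\frac{f''}{f'}\,dz$, both integrals being legitimate by the first step. Subtracting, and using $\frac{f'}{f}-\frac{f''}{f'}=\frac{d}{dz}\log(f/f')$ whose modulus part contributes nothing around the closed curve, gives $Z_f-Z_{f'}=\frac{1}{2\pi}\,\Delta_\Gamma\arg(f/f')$, where $\Delta_\Gamma\arg(\cdot)$ denotes the change of argument along $\Gamma$. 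By the previous paragraph, $\arg\bigl(f(z(s))/f'(z(s))\bigr)=-\tfrac{\pi}{2}+\arg z'(s)$, so $\Delta_\Gamma\arg(f/f')=\Delta_\Gamma\arg z'(s)=2\pi$ by the theorem of turning tangents (Hopf's Umlaufsatz) for a positively oriented Jordan curve. Therefore $Z_f-Z_{f'}=1$, which is the claim. I expect the only genuine obstacle to be the sign bookkeeping in the middle step: one must be sure that $\tfrac{f'}{f}\,z'(s)$ is a \emph{positive} imaginary multiple, which is exactly where the hypothesis $\abs{f}<t$ inside $\Gamma$ enters (it fixes the direction of the outward normal), since an incorrect sign there would spuriously produce $0$ or $2$ in place of $1$. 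A secondary point, already dealt with above, is confirming that smoothness of $\Gamma$ simultaneously forces $f'\neq 0$ on $\Gamma$ and makes $\Gamma$ a genuine Jordan curve bounding $U$.
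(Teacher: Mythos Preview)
The paper does not prove this theorem: it is quoted verbatim from Titchmarsh \cite{Tit} as background in the Introduction, with no proof supplied. So there is no ``paper's own proof'' to compare against.

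That said, your argument is correct. The key ingredients---that smoothness of $\Gamma$ forces $f'\neq 0$ on $\Gamma$, that $\tfrac{f'}{f}\,z'(s)$ is positive imaginary along the positively oriented $\Gamma$ because $\nabla\abs{f}^2=2\overline{f'}f$ points outward, and that the resulting equality $\arg(f/f')=-\tfrac{\pi}{2}+\arg z'(s)$ reduces $Z_f-Z_{f'}$ to the turning number via the Umlaufsatz---are all handled correctly, and your sign bookkeeping checks out. One small point worth making explicit: the hypothesis that $\Omega$ is simply connected is what guarantees the bounded Jordan interior of $\Gamma$ lies in $\Omega$, so that $f$ and $f'$ are holomorphic on $\overline{U}$ and the argument principle applies; you gesture at this but it deserves a sentence. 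This is in fact essentially the classical proof one finds in Titchmarsh, so had the paper included a proof it would have been the same route.
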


We are aware of only three previous works concerning the sort of a pairing between zeros and critical points discussed here. From the math literature, there are the author's two articles \cite{H1,H2} which study a large class of Gaussian random polynomials called Hermitian Gaussian Ensembles (HGEs). The simplest HGE is the $SU(2)$ or Kostlan ensemble:
\begin{equation}
p_N^{SU(2)}(z):=\sum_{j=0}^N a_j ~\sqrt{\binom{N}{j}}~z^{\, j},\qquad a_j\sim N(0,1)_{\C}\,\,\, i.i.d.\label{E:SU2}
\end{equation}
Even for the $SU(2)$ ensemble, the pairing of zeros and critical points was a new result (see Figure 2). The proofs in \cite{H1,H2} are not elementary, however, because the distribution of zeros and critical points for HGEs is highly non-trivial and is written in terms of so-called Bergman kernels. Moreover, most of the theorems in \cite{H1,H2} do not discuss the angular dependence between a zero and its paired critical point. The present article, in constrast, studies the simplest possible ensembles of random polynomials from the point of view of the joint distribution of the zeros. Namely, we fix some some number of zeros and choose the others uniformly and independently from a fixed probability measure on $S^2.$ In this situation, we are able to give the first completely elementary proof of the pairing of zeros and critical points for a random polynomial. 

The other article we are aware of is the heuristic work of Dennis-Hannay \cite{DH} from the physics literature. They give an electrostatic explanation for why, for certain special kinds of random polynomials, zeros with a large modulus should be paired to a critical point. In \S \ref{S:Electrostatics} we give a somewhat different and more flexible electrostatic argument that explains the pairing of zeros and critical points. Our reasoning also predicts the distance between from a zero to its paired critical point as well as the existence of regions where such a pairing breaks down. 

To conclude, let us mention the works of Kabluchko \cite{Kab}, Pemantle-Rivlin \cite{PR}, and Subramanian \cite{Sub}, which study the empirical measure of critical points for ensembles of random polynomials similar to the ones we consider here. We also point the reader to the work of Nazarov-Sodin-Volberg \cite{NSV} and the recent article of Feng \cite{Fen}, which both concern the critical points of random holomorphic functions with respect to a smooth connection. 

 \begin{figure}[h]
\vskip -.2cm
\centering
\includegraphics[width=.48\textwidth]{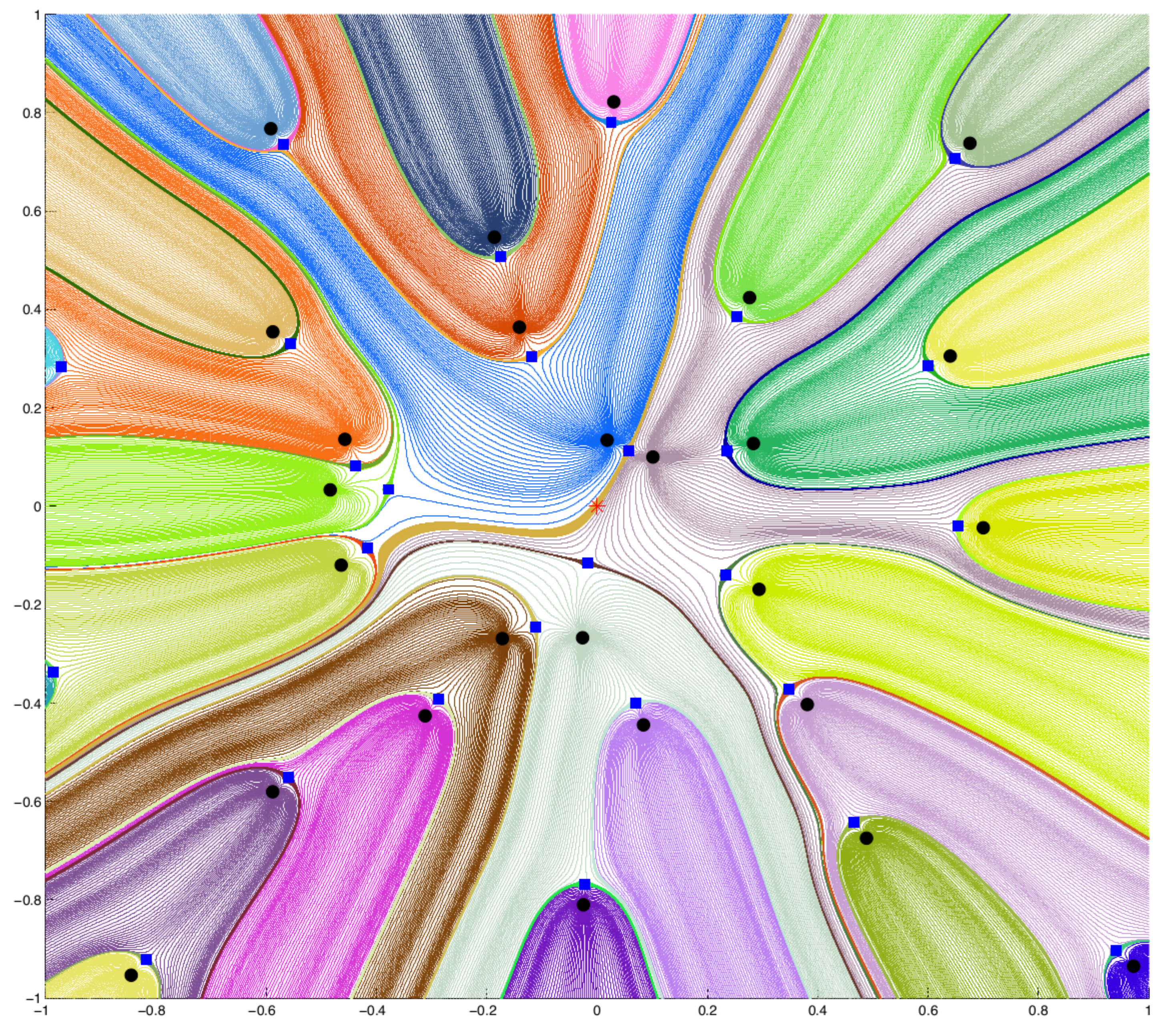}
\includegraphics[width=.48\textwidth]{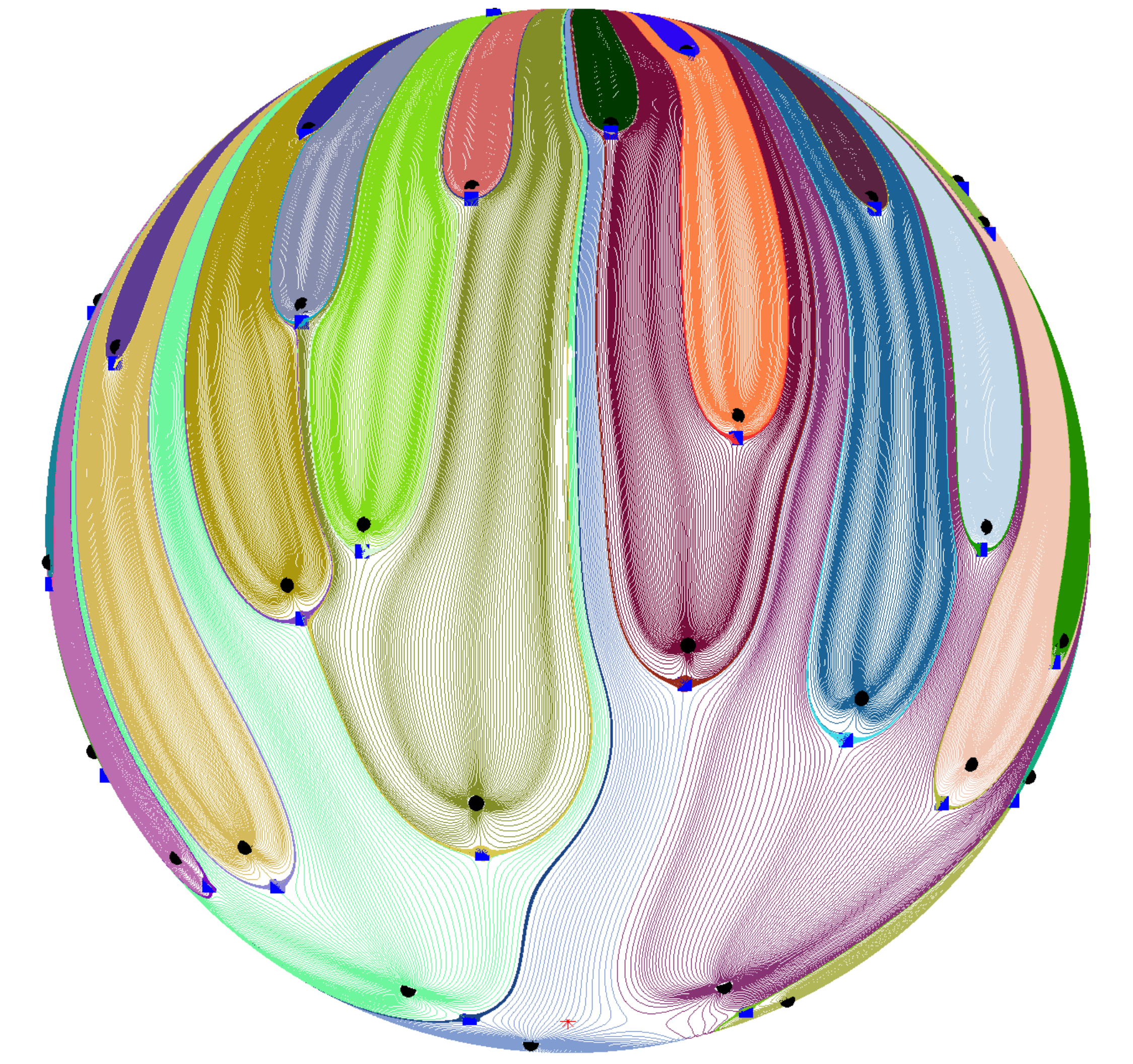}
\label{F:Unscaled SU2 50}
\caption{Zeros (black disks) and critical points (blue squares) for a degree $50\,\, SU(2)$ polynomial $p$ (defined in \eqref{E:SU2}) displayed on the left in coordinates $\C \cong S^2\backslash \set{\infty}$ and on the right directly on $S^2$. The colored lines are gradient flow lines for $-\abs{p(z)}^2$. Lines of the same color terminate at the same point. The origin is denoted by a red asterisk in both figures.}
\end{figure}

\section{Acknowledgements}
I am grateful to Leonid Hanin and Steve Zelditch for many useful comments on earlier drafts of this article. I am also indebted to Ron Peled who shared with me the Matlab code (originally written by Manjunath Krishnapur) that I modified to create Figures 3 and 4. Finally, I would like to acknowledge Bruce Torrence and Paul Abbott, whose Mathematica demonstration \cite{AT} I modified to create Figures 1 and 2. 

\section{Electrostatic Interepretation of Zeros and Critical Points}\label{S:Electrostatics}
The idea that zeros and critical points of a complex polynomials have an electrostatic interpretation goes back to Gauss \cite[Preface and \S 2]{M}. And the observation that zeros with a large modulus should be paired to a critical point in certain special kinds of random polynomials (and for certain random entire functions) was stated by Dennis and Hanny in \cite{DH}. They give a heuristic explanation for this pairing, which is similar in spirit to ours, but does not use that polynomials have a high multiplicity pole at infinity.

We begin by explaining Gauss's proof of the Gauss-Lucas Theorem. Suppose $p_N$ is a degree $N$ polynomial and $\xi_j,\, j=0,\ldots, N-1$ are its zeros. The critical points of $p_N$ are solutions to
\[\frac{d}{dw}p_N(w)=0 \qquad \longleftrightarrow \qquad \dell \log\abs{p_N(w)}^2=\sum_{j=0}^{N-1}\frac{1}{w-\xi_j}=0.\]
A basic observation is that $\lr{\overline{z-\xi_j}}^{-1}$ is the electric field at $z$ from a $+1$ charge at $\xi_j.$ The sum $\dell\log\abs{p_N(z)}^2$ is therefore the complex conjugate of the total electric field $E_N(z)$ at $z$ from positive point charges placed at each $\xi_j.$ This means 
\[\frac{d}{dw}p_N(w)=0 \qquad \longleftrightarrow \qquad \text{electric field satisfies }E_N(w)=0.\]
Since all charges have the same sign, $E_N$ cannot vanish outside of their convex hull. Hence the Gauss-Lucas Theorem.

The preceeding argument relied very much on the particular coordinates on $S^2$ (the convex hull is not a coordinate-free notion). However, the electrostatic interpretation of critical points can itself be done in a coordinate invariant way. We start by viewing $p_N$ as a meromorphic function on $S^2.$ Then
\[\Delta\log \abs{p_N}^2=Div(p_N)=-N\delta_{\infty}+\sum_{p_N(\xi)=0}\delta_{\xi},\]
where we've written $\Delta =\frac{i}{2\pi}\dell \overline{\dell}$ for the Laplacian, and the equality is in the sense of distributions. This means that 
\[E_N(w)=\dell\log \abs{p_N(w)}^2\] 
is a one-form that at any $w\in S^2$ gives the (complex conjugate of the) electric field at $w$ from charge distributed according to $Div(p_N).$ Gauss's choice of coordinates makes $\infty$ infinitely far way and and so the contribution to $E_N$ from the $-N$ charges at infinity was zero. To get a true electric field, we need to covert $E_N$ to a vector field using a metric. However, the points where $E_N$ vanishes (i.e. the critical point of $p_N$) are independent of such a choice.

This point of view is closely related to the classical notion of a polar derivative of a complex polynomial \cite[\S 3]{M}, which can be used to prove some coordinate free versions of the Gauss-Lucas Theorem such as Laguerre's Theorem \cite[p. 49]{M}. That it should imply a pairing of zeros and critical points never seems to have been observed, however.

It is precisely the charge of size $N$ at $\infty$ that clarifies why zeros and critical points come in pairs. To see this, let us consider the case of a degree $N$ polynomial $p_{N,\xi}$ that has a zero at a fixed point $\xi\in S^2,$ while its remaining zeros $\xi_1,\ldots, \xi_{N-1}$ are chosen independently from the uniform measure $\mu$ on $S^2$. We must explain why, with high probability, there is a point $w_\xi$ near $\xi$ at which the electric field $E_N(w_\xi)$ vanishes. In the holomorphic coordinate $w$ centered at $\infty,$ we have
\begin{equation}
p_{N,\xi}(w)=\frac{1}{w^N}\lr{w-\xi}\prod_{j=1}^{N-1}\lr{w-\xi_j}\qquad \xi_j\sim \mu\quad i.i.d.\label{E:patinfty}
\end{equation}
Thus,
\begin{equation}
E_N(w)=-\frac{N}{w}+\frac{1}{w-\xi}+\sum_{j=1}^{N-1}\frac{1}{w-\xi_j},\label{E:avgE}
\end{equation}
The first term in \eqref{E:avgE} is the contribution from the $-N$ charges at $\infty,$ while the second comes from the $+1$ charge at $\xi$, which is also of order $N$ if $\abs{w-\xi}\approx N^{-1}.$ The third term is a sum of iid random variables. It is equal to zero on average (cf \eqref{E:avgunif}). Hence, heuristically, it is should be on the order of $N^{1/2}$ by the central limit theorem. Therefore, to leading order in $N,$ the electric field near $\xi$ is very close to its average
\begin{equation}
E_N(w)\approx \E{E_N(w)}=-\frac{N}{w}+\frac{1}{w-\xi}.\label{E:avgE2}
\end{equation}
As long as $\xi\not \in \set{\underline{0},\infty}$ (here $\underline{0}$ is the antipodal point on $S^2$ to $\infty$) there will be a unique solution 
\begin{equation}
w_{N,\xi}=\xi\lr{1-\frac{1}{N}}^{-1}\label{E:critatinfty}
\end{equation}
to 
\[\E{E_N(w)}=0\] 
in the regime where the approximation \eqref{E:avgE2} is valid. Note that the distance from $w$ to $w_\xi$ is on the order of $N^{-1}.$ The true critical point of $p_N$ will, by Rouch\'e's Theorem, therefore be a small perturbation of $w_{N,\xi}.$ Note that $w_\xi$ is a bit farther away from $\infty$ than $\xi$ and hence is closer to $\underline{0}$ as shown in Figures 1-4. The condition that $\xi\not \in \set{\underline{0},\infty}$ is not an artifact of our reasoning. Figures 1-4 clearly show the existence of regions where the pairing of critical points breaks down: at $\xi=\underline{0},$ the contribution to $E_N$ from the charges at $\infty$ precisely cancels by symmetry. Near $\underline{0}$ the field $E_N$ is controlled by the nearby zeros whose statistical fluctuations cannot be ignored.

Now let us suppose that the zeros $\xi_1,\ldots, \xi_{N-1}$ are still uniformly distributed but now according to an arbitrary probability measure $\mu$ on $S^2.$ There will still be a pairing of zeros and critical points, but the pairs will no longer necessarily align with $\infty.$ Indeed, the main difference in this case is that instead of \eqref{E:avgE2}, the electric field near $\xi$ will have a non-zero contribution from the average of the third term in \eqref{E:avgE}. To leading order in $N,$ we have
\begin{equation}
E_N(w)\approx \E{E_N(w)}\approx N\lr{-\frac{1}{w}+\phi_\mu(w)}+\frac{1}{w-\xi},\label{E:avgE3}
\end{equation}
where
\begin{equation}
\phi_\mu(w)=\int_{\C}\frac{d\mu(\zeta)}{w-\zeta}\label{E:phidef}
\end{equation}
is the (complex conjugate of the) average electric field at $w$ from a zero distributed according to $\mu.$ To ensure a unique point $w_{N,\xi}$ where the right hand side of \eqref{E:avgE3} vanishes that is near $\xi$ we must ask that $\xi \not \in S_\mu,$ where
\begin{equation}
S_\mu:=\setst{w\in \C}{-\frac{1}{w}+\phi_\mu(w)\in \set{0,\infty}}.\label{E:Sdef}
\end{equation}
Expilcitly,
\begin{equation}
w_{N,\xi}= \xi\lr{1-\frac{1}{N}\cdot \frac{1}{\phi_\mu(\xi)\cdot \xi-1}}\label{E:gencrit}
\end{equation}
plus an $O(N^{-2})$ error and 
\[\arg(w_{N,\xi}-\xi)=\arg(\xi)-\arg\lr{\frac{1}{\xi}-\phi_\mu(\xi)}+O(N^{-1})\]
Before stating the rigorous results of this article, we remark that the heuristic argument given here does not make strong use of the iid nature of the zeros of $p_N$. It does crucially rely on the assumption that they are well-spaced and not too correlated, however. 

\section{Main Result}
Our main result, Theorem \ref{T:SCor}, can be stated loosely as follows. Consider a degree $N$ polynomial $p_N,$ viewed as a meromorphic function on $S^2.$ Suppose $p_N$ has a zero at a fixed point $\xi,$ while its other zeros are randomly and independently selected. Then, we are likely to observe a critical point $w_\xi$ of $p_N$ a distance about $N^{-1}$ away from $\xi$. 

Note that if we choose $N$ independent points at random from the uniform measure on $S^2,$ then the typical spacing between nearest neighbors is on the order of $N^{-1/2},$ which is much larger than the $N^{-1}$ spacing between a zero and its paired critical point. Observe also that Theorem \ref{T:SCor} is genuinely probabilistic and does not hold for polynomials of the form $z^N-R^N$. Nonetheless, as explained in the Introduction, multiplying $z^9-1$ by a single linear factor already makes the zeros and critical points of the resulting polynomial come in pairs (see Figure 1).

Let us write $\mathcal P_N$ for the space of polynomials of degree at most $N$ in one complex variable. Since the zeros and critical points of $p_N\in \mathcal P_N$ are unchanged after multiplication by a non-zero constant, we study random the zeros and critical points of a random polynomial by putting a probability measure directly on the projectivization $\mathbb P \lr{\mathcal P_N}$ as follows. 
\begin{definition}\label{D:Pdef}
Fix $\xi \in S^2$ and a probability measure $\mu$ with a bounded density with respect to the uniform measure on $S^2.$ Define $[p_{N,\xi}]$ to be a random element in $\mathbb P \lr{\mathcal P_N}$ with a (deterministic) zero at $\xi$ and $N-1$ (random) zeros $\set{\xi_1,\ldots, \xi_{N-1}}$ distributed according to the product measure $\mu^{\otimes (N-1)}$ on $\lr{S^2}^{N-1}.$  
\end{definition}
Slightly abusing notatoin, will hencefore write $p_{N,\xi}$ for any representative of $[p_{N,\xi}].$ We also identify once and for all polynomials with meromorphic functions on $S^2$ that have a pole at the distinguished point $\infty \in S^2,$ and we will write $\underline{0}$ for the antipodal point to $\infty.$ With $w$ denoting the usual holomorphic coordinate on $S^2$ centered at $\infty,$ $p_{N,\xi}$ is given by \eqref{E:patinfty}.

Let $S_\mu$ be defined as in \eqref{E:Sdef}. For each $\xi\in S^2\,\backslash\, S_\mu,$ we define $w_{\xi,N}$ to be the unique solution to the averaged critical point equation
\begin{equation}
\E{\dell_w \log \abs{p_{N,\xi}(w)}^2}=0 \label{E:critdef}
\end{equation}
whose distance from $\xi$ is on the order of $N^{-1}.$ The point $w_{N,\xi}$ is therefore a point whose distance from $\xi$ is approximately $N^{-1}$ where the electric field is expected to vanish. See \eqref{E:gencrit} for an asymptotic formula for $w_{\xi,N}$.
\begin{Thm}[Pairing of Single Zero and Critical Point]\label{T:SCor}
Let $\mu$ be any probability measure on $S^2$ that has a bounded density with respect to the uniform measure, and fix $\xi \in S^2\backslash S_\mu,$ where $S_\mu$ is defined in \eqref{E:Sdef}. Consider $p_{N,\xi}$ as in Definition \ref{D:Pdef}. Fix $r>0,$ and write $\Gamma_N$ for the geodesic circle of radius $r N^{-1}$ centered at $w_{N,\xi}$. Suppose that $\xi \not \in \Gamma_N$ for all $N.$ Then, for any $\delta\in (0,1),$ there exists $C=C(r, \delta)>0$ so that for all $N$
\[\mathbb P\lr{\exists ! ~w\text{ inside }\Gamma_N~~\text{ s.t. }~~ \frac{d}{dw}p_{N,\xi}(w)=0}\geq 1- C \cdot N^{-\delta}.\]
\end{Thm}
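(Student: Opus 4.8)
The plan is to prove the theorem by a Rouch\'e-type argument on the circle $\Gamma_N$, comparing the true derivative $\tfrac{d}{dw}p_{N,\xi}(w)$ against its conditional expectation. Concretely, write $E_N(w) = \dell_w \log |p_{N,\xi}(w)|^2 = -\tfrac{N}{w} + \tfrac{1}{w-\xi} + \sum_{j=1}^{N-1}\tfrac{1}{w-\xi_j}$ as in \eqref{E:avgE}, and set $\overline{E}_N(w) = \E{E_N(w)} = -\tfrac{N}{w} + \tfrac{1}{w-\xi} + N\phi_\mu(w)$ (up to a harmless $O(1)$ term since there are $N-1$ rather than $N$ random zeros). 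Since $w_{N,\xi}$ is by definition the unique nearby solution of $\overline{E}_N(w)=0$, and the zeros of $E_N$ inside $\Gamma_N$ are exactly the critical points of $p_{N,\xi}$ there, it suffices to show that with probability $\geq 1 - CN^{-\delta}$ we have $|E_N(w) - \overline{E}_N(w)| < |\overline{E}_N(w)|$ for every $w \in \Gamma_N$; then Rouch\'e's theorem (applied to the holomorphic function $w \mapsto w^N\prod_j(w-\xi_j)^{-1}\cdot$(numerator of $E_N$), or more carefully to the meromorphic $1$-form, tracking that the pole structure of $E_N$ and $\overline{E}_N$ agree inside $\Gamma_N$ for large $N$) gives that $E_N$ has exactly one zero inside $\Gamma_N$, matching the single zero of $\overline{E}_N$.

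The two ingredients are then a deterministic lower bound on $|\overline{E}_N(w)|$ for $w \in \Gamma_N$, and a probabilistic upper bound on the deviation $|E_N(w)-\overline{E}_N(w)|$. For the lower bound: since $\xi \notin S_\mu$, the function $-\tfrac1w + \phi_\mu(w)$ is nonzero and finite at $\xi$, so near $\xi$ one has $\overline{E}_N(w) = N(-\tfrac1w + \phi_\mu(w)) + \tfrac{1}{w-\xi} + O(1)$, and this is the sum of a term of size $\sim N$ and a term $\tfrac{1}{w-\xi}$ of size $\sim N/r$ (since $|w-\xi| \asymp N^{-1}$ on $\Gamma_N$, using $\xi \notin \Gamma_N$). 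A short computation — essentially expanding around $w_{N,\xi}$, whose defining equation makes the two leading terms cancel — shows $|\overline{E}_N(w)| \geq c(r)\, N$ uniformly for $w \in \Gamma_N$, for a constant $c(r)>0$ depending only on $r$ and on $\phi_\mu(\xi), \phi_\mu'(\xi)$. For the deviation bound: $D_N(w) := E_N(w) - \overline{E}_N(w) = \sum_{j=1}^{N-1}\bigl(\tfrac{1}{w-\xi_j} - \E{\tfrac{1}{w-\xi_j}}\bigr)$ is a sum of i.i.d.\ centered random variables. The summands are \emph{not} bounded — $\tfrac{1}{w-\zeta}$ blows up as $\zeta \to w$ — but because $\mu$ has bounded density with respect to uniform measure on $S^2$, we have $\E{|w-\xi_j|^{-1}} = O(1)$ and $\E{|w-\xi_j|^{-p}} = O(1)$ for any fixed $p < 2$ uniformly in $w$; this lets us control $\E{|D_N(w)|^p}$ for some $p \in (1,2)$ by a Marcinkiewicz–Zygmund / von Bahr–Esseen inequality, giving $\E{|D_N(w)|^p} = O(N)$, hence $|D_N(w)| = O(N^{1/p})$ with good probability — in particular $o(N)$, which beats the $cN$ lower bound. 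Optimizing $p$ near $2$ (the density hypothesis caps us below $p=2$, reflecting the genuine heavy tail from near collisions) yields deviation $N^{1/2+\epsilon}$ and a failure probability $\lesssim N^{-\delta}$ for any $\delta<1$, after choosing $p = p(\delta)$ appropriately.

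The remaining subtlety is uniformity over the circle $\Gamma_N$, since the estimate above is pointwise in $w$. I would handle this with a union bound over a net of $\sim N^{A}$ points on $\Gamma_N$ (whose circumference is $\sim N^{-1}$, so spacing $\sim N^{-1-A}$) together with an a priori Lipschitz/derivative bound on $D_N$; the derivative $D_N'(w)$ is again a centered i.i.d.\ sum, now with summands $\sim |w-\xi_j|^{-2}$ whose expectation is only logarithmically divergent under the bounded-density assumption, so a crude $L^1$ or truncation bound shows $\sup_{\Gamma_N}|D_N'| = O(N^{2+o(1)})$ with overwhelming probability, making the net argument go through at the cost of inflating constants and polynomial powers of $N$ inside the probability — which is fine since we only need a fixed negative power $N^{-\delta}$.

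The step I expect to be the main obstacle is the deviation estimate for $D_N(w)$: the $\tfrac{1}{w-\zeta}$ kernel is not in $L^2(\mu)$ uniformly in $w$ (the integral $\int |w-\zeta|^{-2}\,d\mu(\zeta)$ diverges logarithmically even for bounded density), so one genuinely cannot use a clean $L^2$/Chebyshev argument and must instead work with fractional moments $p<2$ and a moment inequality for sums of independent centered variables with only $p$-th moments. This is also exactly where the ``$N^{-\delta}$, $\delta<1$'' shape of the conclusion comes from, and where the bounded-density hypothesis on $\mu$ is used in an essential way; getting the bookkeeping of the exponents right (choosing $p$ as a function of $\delta$, and checking the net argument does not erode the bound) is the technical heart of the proof.
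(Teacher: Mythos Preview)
Your proposal is correct and will prove the theorem, but it takes a genuinely different route from the paper's argument. Both proofs start identically: set up Rouch\'e on $\Gamma_N$, observe that $|\overline{E}_N(w)|\geq cN$ on $\Gamma_N$, and reduce to showing $\sup_{\Gamma_N}|\twiddle{E}_N|\leq N^{1-\gamma}$ with probability $\geq 1-CN^{-\delta}$. The divergence is in how the sup is controlled. The paper \emph{anchors at the center}: it writes $\twiddle{E}_N(w)=(\twiddle{E}_N(w)-\twiddle{E}_N(w_\xi))+\twiddle{E}_N(w_\xi)$, bounds the increment by a telescoping identity that converts the $w$-dependence into sums of $(w_\xi-\xi_j)^{-k}$ for $k\geq 2$, and controls $\twiddle{E}_N(w_\xi)$ by a conditional second moment (conditioning on the event $B=\{\mathcal N(w_\xi,N^{-1+\eta})=0\}$, under which $\E[\,|\twiddle{E}_N(w_\xi)|^2\mid B\,]\lesssim N\log N$). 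Your route instead gets a \emph{pointwise} tail bound on $D_N(w)$ via $p$-th moments with $p<2$ and a von Bahr--Esseen inequality, and then upgrades to the sup by a net plus a Lipschitz bound on $D_N'$. Your fractional-moment step is slick because it absorbs the heavy tail without any conditioning; the paper's conditional-$L^2$ step is more bare-hands but makes the near-collision mechanism (which is what limits $\delta<1$) completely explicit.

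One point to flag: your Lipschitz step is where the two arguments reconverge, and it is not as ``crude'' as you suggest. A direct $L^1$ bound on $\sum_j|w-\xi_j|^{-2}$ is infinite, and bounding each term by $N^{2-2\eta}$ after truncation gives only $N^{3-2\eta}$. What actually works is: on the event $\{\mathcal N(w_\xi,N^{-1+\eta})=0\}$ one has $|w-\xi_j|\asymp|w_\xi-\xi_j|$ uniformly for $w\in\Gamma_N$, so $\sup_{\Gamma_N}\sum_j|w-\xi_j|^{-2}\lesssim\sum_j|w_\xi-\xi_j|^{-2}$, and the latter is bounded by $N^{2-2\eta}$ with probability $\geq 1-O(N^{-1+2\eta}\log N)$ via Markov applied to its conditional expectation $\lesssim N\log N$. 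This is exactly the paper's Lemma~\ref{L:Fixed Efld}, so your approach ends up needing the same conditional-moment computation, just deployed on $D_N'$ rather than on $D_N$ itself.
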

\begin{remark}\label{R:GenCont}
The conclusion of Theorem \ref{T:SCor} is actually true for any simple closed contour $\Gamma_{N,\xi}$ with winding number $1$ around $w_{\xi,N}$ that does not pass through $\xi$ and satisfies:
\begin{enumerate}
\item[(i)] There exists $c_1>0$ so that 
\[\inf_{w\in \Gamma_{N,\xi}}\big|\,\E{\dell \log\abs{p_{N,\xi}(w)}^2}\,\big|\geq c_1\cdot N.\]
\item[(ii)] There exists $c_2>0$ so that for all $N$
\[\sup_{w\in \Gamma_{N,\xi}} d_{S^2}(w,\xi)\leq c_2\cdot N^{-1},\]
where $d_{S^2}$ is the usual distance function on $S^2.$
\end{enumerate}
\end{remark}

\subsection{Generalizations of Theorem \ref{T:SCor}}
Theorem \ref{T:SCor} can generalized in many different ways. A particularly simple extension concerns the simultaneous pairing of $N^\alpha$ zeros and critical points for any $\alpha \in [0,1).$ To give a exact statement, consider for each $N$ a finite collection of at most $N$ points $\Xi_N\subseteq S^2\backslash S_\mu.$ Define $p_{N,\Xi_N}$ to be a random degree $N$ polynomial that vanishes at each $\xi\in \Xi_N$ and whose other zeros are chosen independently from $\mu.$ As above, this defintion actually specifies an equivalence class $[p_{N,\Xi_N}]$ in $\mathbb P\lr{\mathcal P_N}$ all of whose representatives have the same zeros and critical points. 

\begin{Thm}[Pairing of $N^\alpha$ Zeros and Critical Points]\label{T:Main}
Fix $\alpha\in [0,1)$ and $\delta\in (0,1-\alpha).$ For every $N\geq 1,$ choose an integer $n(N)$ between $1$ and $N^{\alpha}$ and a collection of $n(N)$ points $\Xi_N\subseteq S^2\backslash S_\mu$ satisfying
\begin{enumerate}
\item[(A)] There exists $\ep\in (0,1)$ so that for every $N$ and every pair of distinct point $\xi_1,\xi_2\in \Xi_N$ we have $\abs{\xi_1-\xi_2}>N^{-1/2+\ep/2}.$
\end{enumerate}
Define $p_{N,\Xi_N}$ as above, and for each $N$ and every $\xi\in \Xi_N,$ let $\Gamma_{N,\xi}$ be the geodesic circle of radius $rN^{-1}$ centered at $w_{N,\xi}$. If $\xi \not \in \Gamma_{N,r,\xi}$ for all $N,$ then there exists $C=C(\alpha, \delta, \ep,r)>0$ so that 
\[\mathbb P\lr{\forall \xi \in \Xi_N\quad \exists ! ~w\text{ inside }\Gamma_{N,\xi}~~\text{ s.t. }~~ \frac{d}{dw}p_{N,\Xi_N}(w)=0}\geq 1- C\cdot N^{-\delta}.\]
\end{Thm}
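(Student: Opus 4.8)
The plan is to reduce the simultaneous pairing statement to $n(N)\leq N^\alpha$ applications of (the quantitative heart of) Theorem \ref{T:SCor}, together with a union bound. The key point is that the argument behind Theorem \ref{T:SCor} does not merely produce a probability bound of the form $1-CN^{-\delta}$; it produces, for each fixed $\xi$, a bound of the form $\mathbb P(\text{failure at }\xi)\leq C' N^{-\delta'}$ for every $\delta'<1$, where the constant $C'$ depends only on $r$, on $\delta'$, and on the lower bound $c_1$ and upper bound $c_2$ appearing in Remark \ref{R:GenCont}(i)--(ii). In the present setting these constants are \emph{uniform} over $\xi\in S^2\backslash S_\mu$ provided $\Xi_N$ stays a fixed distance away from $S_\mu$ — but since $\Xi_N$ need not do so, the first thing I would do is observe that $\xi\mapsto w_{N,\xi}$ and the relevant field bounds degrade only polynomially as $\xi$ approaches $S_\mu$, so it suffices to carry along the constants as functions of $\operatorname{dist}(\xi,S_\mu)$ and absorb the worst case; alternatively, and more cleanly, restrict attention to the given collection $\Xi_N$ and note that the hypotheses of the theorem already presuppose $w_{N,\xi}$ is well-defined for every $\xi\in\Xi_N$, which forces the needed bounds to hold with constants depending only on the data $(\alpha,\delta,\ep,r)$.

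The substantive step is the estimate underlying Theorem \ref{T:SCor}: one writes, on the contour $\Gamma_{N,\xi}$,
\[
\dell_w\log\abs{p_{N,\Xi_N}(w)}^2 \;=\; \E{\dell_w\log\abs{p_{N,\xi}(w)}^2} \;+\; \lr{\text{fluctuation}} \;+\; \lr{\text{contribution of the other }\xi'\in\Xi_N},
\]
and applies Rouch\'e's theorem: the averaged field has modulus $\geq c_1 N$ on $\Gamma_{N,\xi}$ by construction of $w_{N,\xi}$ and a winding-number-one zero inside, so it suffices to show the remaining two terms are $o(N)$ with the claimed probability. For the fluctuation term $\sum_{j}\bigl(\tfrac{1}{w-\xi_j}-\E{\tfrac{1}{w-\xi_j}}\bigr)$ one uses that $w$ is within $c_2 N^{-1}$ of $\xi$, which lies in $S^2\backslash S_\mu$, so with overwhelming probability no random zero $\xi_j$ lands within, say, $N^{-1/2-\eta}$ of $\xi$ (here the bounded-density hypothesis on $\mu$ is used, giving that the expected number of such zeros is $O(N^{-2\eta})$, hence a $1-O(N^{-\delta'})$ lower bound on the good event after optimizing $\eta$); on that good event each summand is $O(N^{1/2+\eta})$ and a Bernstein/Chebyshev bound on the centered sum of bounded independent variables gives a deviation of order $N^{1/2+\eta}\cdot N^{1/2}=N^{1+2\eta}$ only on a set of probability $N^{-\delta'}$, which one must be careful to beat — this is exactly the place where one optimizes the truncation radius against the variance, and it is the technical crux inherited from Theorem \ref{T:SCor}.

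The genuinely new ingredient for Theorem \ref{T:Main} is controlling the cross term $\sum_{\xi'\in\Xi_N,\,\xi'\neq\xi}\tfrac{1}{w-\xi'}$ for $w\in\Gamma_{N,\xi}$. Here hypothesis (A) is decisive: every other $\xi'\in\Xi_N$ is at distance $>N^{-1/2+\ep/2}$ from $\xi$, hence at distance $\gtrsim N^{-1/2+\ep/2}$ from $w$ as well (since $w$ is $O(N^{-1})$ from $\xi$), so each of the at most $N^\alpha$ terms is $O(N^{1/2-\ep/2})$ and the whole cross term is $O(N^{\alpha+1/2-\ep/2})$. Because $\alpha<1$, this is $o(N)$ as soon as $\ep$ is positive, which it is by assumption; in fact one can get a crude bound of $O(N^{\alpha}\cdot N^{1/2-\ep/2})$ uniformly and deterministically, with no probabilistic input at all — the separation of the deterministic zeros does all the work. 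The main obstacle I anticipate is not any single estimate but bookkeeping: one must choose the truncation exponent $\eta$ and the target exponent $\delta'$ (with $\delta<\delta'<1-\alpha$) so that, after a union bound over the $n(N)\leq N^\alpha$ points, the total failure probability is $n(N)\cdot CN^{-\delta'}\leq CN^{\alpha-\delta'}\leq CN^{-\delta}$, while simultaneously the cross-term bound $N^{\alpha+1/2-\ep/2}$ and the fluctuation bound remain $o(N)$ on the relevant good events; verifying that the constraints $\delta<1-\alpha$ and $\ep\in(0,1)$ leave enough room is straightforward but is where the hypotheses must be used in a coordinated way. I would organize the write-up as: (1) state the uniform single-point estimate with explicit constant dependence, citing the proof of Theorem \ref{T:SCor}; (2) bound the cross term deterministically using (A); (3) union bound.
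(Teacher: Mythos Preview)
Your overall architecture---run the single-point argument of Theorem~\ref{T:SCor} at each $\xi\in\Xi_N$, control the cross term from the other deterministic zeros using hypothesis~(A), then union-bound---is exactly the paper's approach, and your discussion of the fluctuation term and of how to trade exponents so that the union bound over $N^\alpha$ points still leaves $N^{-\delta}$ is correct.

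There is, however, a genuine gap in your cross-term estimate. You bound
\[
\sum_{\xi'\in\Xi_N\setminus\{\xi\}}\frac{1}{|w-\xi'|}\;\leq\;\#\Xi_N\cdot\max_{\xi'}\frac{1}{|w-\xi'|}\;=\;O\!\left(N^{\alpha}\cdot N^{1/2-\ep/2}\right)\;=\;O\!\left(N^{\alpha+1/2-\ep/2}\right),
\]
and then assert that ``because $\alpha<1$, this is $o(N)$ as soon as $\ep$ is positive.'' That implication is false: $N^{\alpha+1/2-\ep/2}=o(N)$ requires $\alpha<\tfrac{1}{2}+\tfrac{\ep}{2}$, which can easily fail (e.g.\ $\alpha=0.9$, $\ep=0.05$, where your bound is $N^{1.375}$). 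Your crude count uses only the \emph{minimum} distance from $\xi$ to the other points of $\Xi_N$, but most of the $\xi'$ are much farther away than that, and ignoring this overcounts badly.

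The fix, which is what the paper does, is to exploit (A) as a \emph{packing} constraint rather than merely a nearest-neighbor constraint: the annulus $\{jN^{-1/2}<|w-\xi|\leq(j+1)N^{-1/2}\}$ has area of order $jN^{-1}$, and since the points of $\Xi_N$ are $N^{-1/2+\ep/2}$-separated, it contains at most $O(jN^{-\ep})$ of them. Summing $\sum_j (jN^{-\ep})\cdot (N^{1/2}/j)$ over the relevant range of $j$ gives the cross term bounded by $O(N^{1-\ep/2})$, which \emph{is} $o(N)$ for every $\ep>0$ and is independent of $\alpha$. Once you replace your crude bound by this annulus argument, the rest of your outline (uniform single-point estimate plus union bound with $\delta'=\delta+\alpha<1$) goes through.
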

\begin{remark}
The conclusion of Theorem \ref{T:Main} is satisfied if $\Gamma_{N,\xi}$ is any contour satisfying (A) and conditions (i),(ii) from Remark \ref{R:GenCont}.
\end{remark}

There are other directions in which Theorem \ref{T:SCor} can be extended. We indicate some of them here.
\begin{enumerate}
\item The assumption that $\mu$ has a bounded density with respect to Haar measure ensures that typical spacings between the random zeros are like $N^{-1/2}.$ The intuitive electrostatic argument in \S \ref{S:Electrostatics} for why zeros and critical points are paired used only that zeros are weakly correlated and spaced more than $N^{-1}$ apart, however. This suggests that perhaps $\mu$ can be taken to be any measure satisfying a finite energy condition
\[\int_{S^2}\int_{S^2} \log \lr{d_{S^2}(z,w)}d\mu(w)d\mu(z)<\infty,\]
which rules out $\mu$ having an atom or being supported on a curve. 
\item The $1-C_\delta N^{-\delta}$ estimate from Theorem \ref{T:SCor} is sharp in the sense that with probability on the order of $N^{-1}$ there exists a $j$ so that $\abs{\xi_j-\xi}\leq N^{-1}.$ Such a zero will disrupt the local pairing of zeros and critical points. However, if one studies ensembles of polynomials for which zeros repel one another, then the $N^{-\delta}$ estimates can be improved. For instance, in \cite{H1,H2} the author studied such ensembles and showed that the probability of pairing is like $N^{-3/2}.$ Zeros will always repel for polynomials whose coefficients relative to a fixed basis are taken to be iid since the change of variables from coefficients to zeros involves a Vandermonde determinant. It should therefore be possible to generalize the results in this paper to the case when zeros are distributed like a Coulomb gas.
\item Theorem \ref{T:SCor} will not be true as stated for polynomials whose zeros tend to be $N^{-1}$ apart. Consider, for example, the Kac polynomials $p_N^{Kac}(z)=\sum_{j=0}^N a_j z^j$ with $a_j\sim N(0,1)_{\C}$ iid. The zeros are well-known to approximately equidistribute on the unit circle $S^1.$ Nonetheless it is clear from Figure 4 that most zeros are still paired to a unique critical point. A general family of such ensembles introduced by Shiffman-Zelditch in \cite{SZ} and further studied by Bloom in \cite{B} and Bloom-Shiffman in \cite{BS}.
\end{enumerate}

\begin{figure}[h]
\centering
\includegraphics[scale=.5]{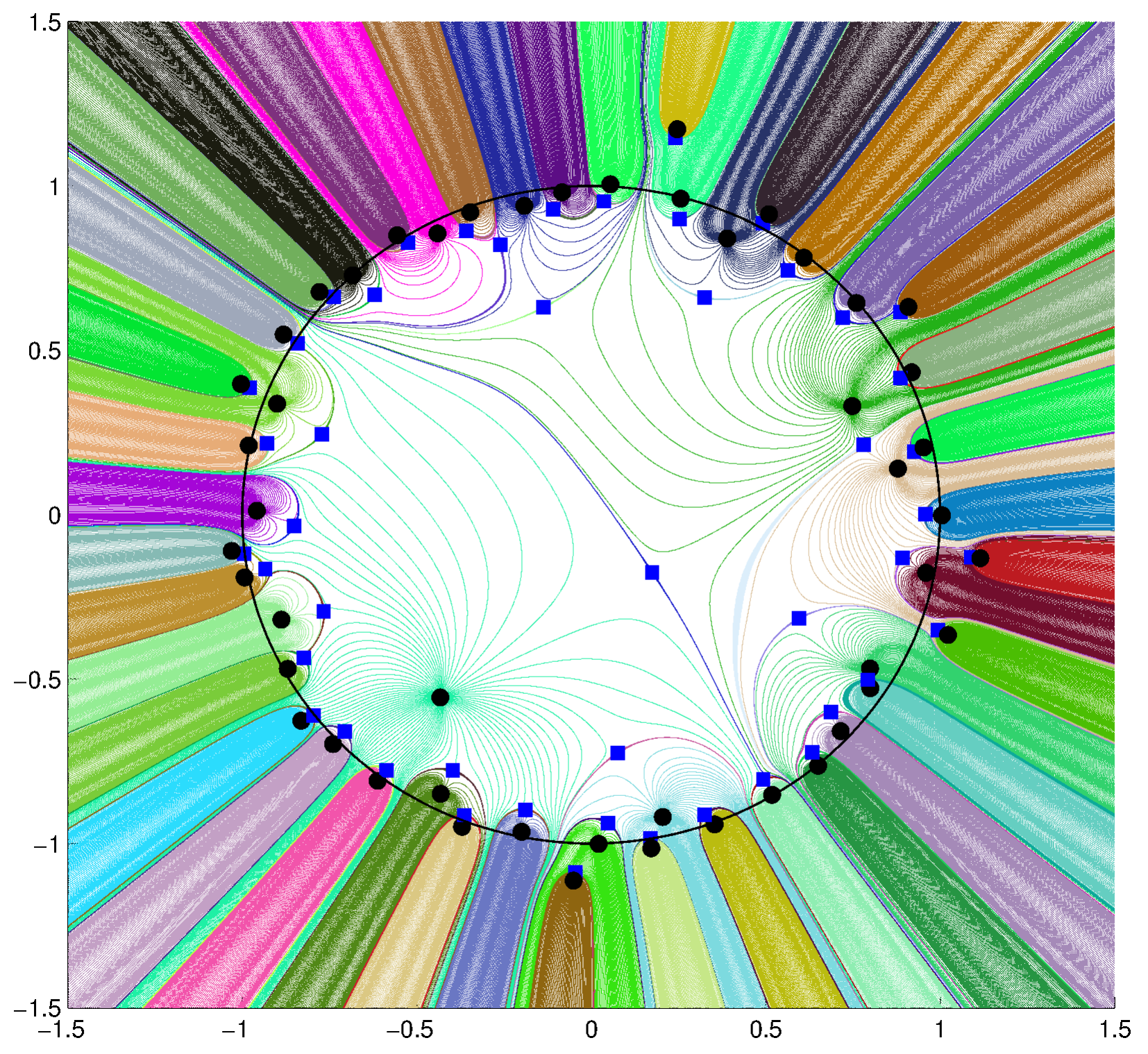}
\label{F:Scaled SU2 Crit}
\caption{Zeros (black disks) and critical points (blue squares) for $p_{50}^{Kac}.$ The colored lines are gradient flow lines for $\abs{p_{50}^{Kac}(z)}^2$. Lines of the same color terminate at the same point.}
\end{figure}

\section{Proof of Theorem \ref{T:SCor}}\label{S:CorPf} We prove Theorem \ref{T:SCor} when $\mu$ is the uniform measure on $S^2$ since the argument for general $\mu$ is identical and only involves carrying along various factors of $\phi_\mu(w)$. For the uniform measure, the average electric field at any fixed $w$ from one of the random zeros vanishes. To see this, we compute in polar coordinates around $w:$
\begin{align}
\notag  \phi_\mu(w)&=\int_{S^2} \frac{d\mu(\zeta)}{w-\zeta}=\int_{\C}\frac{1}{\zeta}\cdot \frac{id\zeta \wedge  d\overline{\zeta}}{2\pi\lr{1+\abs{\zeta}^2}^2}\\
&=\int_0^\infty \lr{1+r^2}^{-2}dr \cdot \int_0^{2\pi} e^{-i\theta}d\theta =0.\label{E:avgunif}
\end{align}
We therefore have
\[S_\mu=\set{\underline{0},\infty}.\] 
We work in the holomorphic coordinate $w$ centered at $\infty$ and fix $\xi\in S^2\backslash S_\mu.$ In our coordinates, $p_{N,\xi}$ is given by \eqref{E:patinfty} and $w_{\xi,N}$, which we shall henceforth abbreviate $w_\xi,$ is given by \eqref{E:critatinfty}. Write as in \S \ref{S:Electrostatics}
\[E_N(w)=-\frac{N}{w}+\sum_{j=0}^{N-1} \frac{1}{w-\xi_j},\]
and recall that critical points of $p_{N,\xi}$ are solutions to $E_N=0.$ The contour $\Gamma_N$ satisfies 
\begin{enumerate}
\item[(i)] There exists $c_1=c_1(r,\xi)>0$ so that 
\[\inf_{w\in \Gamma_N}\abs{-\frac{N}{w}+ \frac{1}{w-\xi}}\geq c_1 \cdot N.\]
\item[(ii)] There exists $c_2=c_2(r,\xi)>0$ so that
\[\sup_{w\in \Gamma_N}\abs{w-\xi}\leq c_2\cdot N^{-1},\]
\end{enumerate}
which are precisely the conditions from Remark \ref{R:GenCont}. Write
\begin{align*}
  \twiddle{E}_N(w)&:=E_N(w)-\E{E_N(w)}=\sum_{j=1}^{N-1} \frac{1}{w-\xi_j},
\end{align*}
and fix $\delta\in (0,1).$ We will show that there exists $\gamma=\gamma(c_1,c_2,\delta)>0$ and $C_3=C_3(c_1,c_2,\delta)$ so that 
\begin{equation}
\mathbb P\lr{\sup_{w\in \Gamma_N}\abs{\twiddle{E}_N(w)}\leq N^{1-\gamma}}\geq 1-C_3\cdot  N^{-\delta}.\label{E:Goal}
\end{equation}
The relation \eqref{E:Goal} and Rouch\'e's theorem would then imply that $p_{N,\xi}$ has a unique critical point inside $\Gamma_N$ with probability at least $1-C_3\cdot N^{-\delta},$ as desired. The proof of \eqref{E:Goal} is elementary but somewhat technical. Before giving the details we give a brief outline for the argument.
\subsection*{\bf Step 1.} Estimating the supremum of the random function $\twiddle{E}_N(w)$ restricted to $\Gamma_N$ is not simple to do directly. The basic reason is that $\twiddle{E}_N(w)$ fluctuates rather wildly (it does not even have a finite variance at a point). So instead we estimate separately the fluctuations of $\twiddle{E}_N(w)-\twiddle{E}_N(w_\xi)$ and of $\twiddle{E}_N(w_\xi).$ 
\subsection*{\bf Step 2.} To estimate $\twiddle{E}_N(w)-\twiddle{E}_N(w_\xi)$ we use that $\abs{w-w_\xi}\approx N^{-1}$ to throw away the contribution to $\twiddle{E}_N(w)-\twiddle{E}_N(w_\xi)$ coming from zeros that are far from $\xi$ (and hence from $w$ and $w_\xi$ as well). Specifically, by condition (ii), there exists $K_1=K_1(c_2,\delta)$ so that for all $N$
\begin{align*}
\sup_{w\in \Gamma_N}\abs{\twiddle{E}_N(w)-\twiddle{E}_N(w_\xi)}&=\sup_{w\in \Gamma_N}\abs{\sum_{\abs{\xi_j-\xi}>N^{-1/2+\delta/2}}\lr{\frac{1}{w-\xi_j}-\frac{1}{w_\xi-\xi_j}}}\\
&\leq \sup_{w\in \Gamma_N}\sum_{\abs{\xi_j-\xi}>N^{-1/2+\delta/2}}\frac{\abs{ w_\xi-w}}{\abs{w-\xi_j}\abs{w_\xi-\xi_j}}\\
&\leq K_1\cdot N^{1-\delta}.
\end{align*} 
\subsection*{\bf Step 3.} Writing
\[\twiddle{E}_N(w,\delta):=\sum_{\abs{\xi-\xi_j}\leq N^{-1/2+\delta/2}}\frac{1}{w-\xi_j},\] 
relation \eqref{E:Goal} now follows once we show that there exists $\gamma=\gamma(c_1,c_2,\delta)>0$ as well as $K_2=K_2(c_1,c_2, \delta)>0$ and $K_3=K_3(c_1,c_2,\delta)>0$ such that
\begin{equation}  
\mathbb P\lr{\sup_{w\in \Gamma_N}\abs{\twiddle{E}_N(w_\xi, \delta)-\twiddle{E}_N(w, \delta)}\geq N^{1-\gamma}}\leq K_2\cdot N^{-\delta}\label{E:Goal2}
\end{equation}
and 
\begin{equation}  
\mathbb P\lr{\abs{\twiddle{E}_N(w_\xi)}\geq N^{1-\gamma}}\leq K_3\cdot N^{-\delta}.\label{E:Goal3}
\end{equation}
\subsection*{\bf Step 4.} There are essentially two reasons that the events whose probabilities we seek to bound in \eqref{E:Goal2} and \eqref{E:Goal3} occur. First, if $\abs{\xi-\xi_j}\approx N^{-1}$ for some $j,$ then $\twiddle{E}_N(w),\twiddle{E}_N(w_\xi)$ will both be on the order of $N$ because of the single term involving $\xi_j.$ Second, if there are many more than $N^\delta$ zeros $\xi_j$ for which $\abs{\xi_j-\xi}\leq N^{-1/2+\delta/2},$ then each term in $\twiddle{E}_N(w_\xi,\delta)$ will be large enough that their sum could well be on the order of $N.$ However, both of these events themselves have small probability. To quantify this, we write for $\zeta \in S^2$
\[\mathcal N(\zeta, R):=\#\setst{j=1,\ldots, N\,\,}{\,\,d_{S^2}\lr{\xi_j,\zeta}\leq R},\]
where as before $d_{S^2}$ is the usual distance function on $S^2$ and prove the following lemma.  
\begin{Lem}\label{L:Number of Zeros}
Fix $\eta\in (0,\tfrac{1}{2}),\,\kappa >0.$ There exist $K=K(\eta)>0$ and $K'=K'(\kappa, \delta)>0$ so that
\begin{equation}
\mathbb P\lr{\mathcal N\lr{w_\xi, N^{-1+\eta}}\geq 1}\leq K \cdot N^{-1+2\eta}\label{E:SmallDisk}
\end{equation}
and
\[\mathbb P\lr{\mathcal N\lr{w_\xi, N^{-\frac{1}{2}+\frac{\delta}{2}}}\geq N^{\delta+\kappa}}\leq K'\cdot N^{-\delta-2\kappa}.\]
\end{Lem}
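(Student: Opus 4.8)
The plan is to prove both estimates by combining a union bound over the $N-1$ i.i.d. zeros with a standard tail/concentration argument for the counting random variable $\mathcal N(\zeta, R)$. The key input is that $\mu$ has bounded density with respect to the uniform measure on $S^2$, so that $\mu(B(\zeta, R)) \leq C R^2$ for any geodesic ball of radius $R$, uniformly in $\zeta$; this is where the hypothesis on $\mu$ enters, and it is the only place it is used. For the first estimate \eqref{E:SmallDisk}, I would simply write $\mathbb P(\mathcal N(w_\xi, N^{-1+\eta}) \geq 1) \leq \sum_{j=1}^{N-1} \mathbb P(d_{S^2}(\xi_j, w_\xi) \leq N^{-1+\eta}) \leq (N-1)\cdot C N^{2(-1+\eta)} \leq K N^{-1+2\eta}$, using that $w_\xi$ is deterministic. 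That is essentially all there is to the first part.

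For the second estimate, the random variable $\mathcal N(w_\xi, N^{-1/2+\delta/2})$ is a sum of $N-1$ independent Bernoulli random variables each with success probability $p \leq C N^{-1+\delta}$, so its mean is $m := \mathbb E[\mathcal N] \leq C N^{\delta}$. We want to bound the probability that it exceeds $N^{\delta + \kappa}$, which is much larger than its mean. I would use a Chernoff bound for sums of independent Bernoullis: $\mathbb P(\mathcal N \geq t) \leq e^{-t} (em/t)^{t}$ for $t > m$. Taking $t = N^{\delta+\kappa}$ and $m \leq C N^\delta$ gives $(em/t)^t \leq (eC N^{-\kappa})^{N^{\delta+\kappa}}$, which decays faster than any power of $N$ — in particular it is eventually $\leq K' N^{-\delta - 2\kappa}$, and one absorbs the finitely many small $N$ into the constant $K' = K'(\kappa,\delta)$. (Alternatively, since the requested bound $N^{-\delta-2\kappa}$ is only polynomial, one could even get away with a second-moment/Chebyshev estimate: $\Var(\mathcal N) \leq m \leq C N^\delta$, so $\mathbb P(\mathcal N \geq N^{\delta+\kappa}) \leq \mathbb P(|\mathcal N - m| \geq \tfrac12 N^{\delta+\kappa}) \leq 4 \Var(\mathcal N)/N^{2\delta+2\kappa} \leq 4C N^{-\delta - 2\kappa}$ once $N$ is large enough that $N^{\delta+\kappa} \geq 2m$; this is cleaner and suffices.)

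I would therefore present it via Chebyshev, since it avoids any delicate constant-chasing: the two ingredients are (a) the density bound $\mu(B(\zeta,R)) \leq CR^2$, giving control of both $\mathbb E[\mathcal N]$ and $\Var(\mathcal N)$, and (b) Markov/Chebyshev applied at the scale $N^{\delta+\kappa}$, which dominates the mean $O(N^\delta)$ for large $N$. The only minor subtlety is handling small $N$, which is dispatched by enlarging the constant, and making sure the balls are geodesic balls on $S^2$ rather than Euclidean disks in the coordinate $w$ — but near a fixed $\xi \notin \{\underline 0, \infty\}$ the two notions of distance are comparable with constants depending only on $\xi$, so this changes nothing. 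I do not anticipate a serious obstacle here; the lemma is a routine large-deviations statement, and the real work of the paper lies in Steps 1–3 preceding it, where these bounds are fed into the estimates \eqref{E:Goal2} and \eqref{E:Goal3}.
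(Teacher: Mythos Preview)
Your proposal is correct and matches the paper's proof essentially line for line: the paper bounds \eqref{E:SmallDisk} by $\mathbb P(\mathcal N\geq 1)\leq \E{\mathcal N}$ (equivalent to your union bound) and obtains the second estimate via the binomial variance bound $\Var(\mathcal N)\leq K_2 N^{\delta}$ together with Chebyshev's inequality. Your additional remarks about the Chernoff alternative, small $N$, and geodesic versus Euclidean balls are correct refinements that the paper omits.
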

\subsection*{\bf Step 5.} Finally, note that the variance of $\twiddle{E}_N(w)$ is 
  \begin{equation}
\E{\abs{\twiddle{E}_N(w)}^2}=\sum_{j=1}^{N-1}\E{\frac{1}{\abs{w-\xi_j}^2}},\notag
\end{equation}
which is infinite. However, the conditional variance given $\mathcal N(w,N^{-1+\eta})=0$ is fairly small and allows us to get a good estimate on the tail probability in \eqref{E:Goal3}. This is the content of the following lemma.
\begin{Lem}\label{L:Fixed Efld}
Fix $\eta\in (0,\tfrac{1}{2}),$ and write $A$ for the event that
\[\sum_{j=1}^{N-1}\frac{1}{\abs{ w_\xi-\xi_j}^2}>N^{2-2\eta}.\]
There exists $K=K(\eta)$ such that
  \begin{equation}\label{E:Fixed Efld} 
\mathbb P\lr{A}\leq K \cdot N^{-1+2\eta}\log N.
  \end{equation}
\end{Lem}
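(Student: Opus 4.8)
The plan is to condition on the event that no random zero lands within distance $N^{-1+\eta}$ of $w_\xi$, control the complementary event with the first estimate \eqref{E:SmallDisk} of Lemma \ref{L:Number of Zeros}, and then bound the resulting truncated sum by its expectation via Markov's inequality. Concretely, set $B=\set{\mathcal N(w_\xi, N^{-1+\eta})=0}$, so that \eqref{E:SmallDisk} gives $\mathbb P(B^c)\leq K\cdot N^{-1+2\eta}$. Since $\mathbb P(A)\leq \mathbb P(A\cap B)+\mathbb P(B^c)$, everything reduces to estimating $\mathbb P(A\cap B)$.

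For each $j$ one has $\mathbf 1_B\leq \mathbf 1_{\set{d_{S^2}(\xi_j,w_\xi)>N^{-1+\eta}}}$, so on $B$ the sum defining $A$ is dominated by $Y:=\sum_{j=1}^{N-1}\mathbf 1_{\set{d_{S^2}(\xi_j,w_\xi)>N^{-1+\eta}}}\abs{w_\xi-\xi_j}^{-2}$, whence Markov's inequality yields $\mathbb P(A\cap B)\leq \mathbb P(Y>N^{2-2\eta})\leq N^{-(2-2\eta)}\,\E{Y}$. Because the $\xi_j$ are i.i.d.\ with law $\mu$, $\E{Y}=(N-1)\int_{d_{S^2}(\zeta,w_\xi)>N^{-1+\eta}}\abs{w_\xi-\zeta}^{-2}\,d\mu(\zeta)$, and the remaining task is to show this integral is $O(\log N)$.

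To do that I would use that $\mu$ has bounded density (identically $1$ in the uniform case) with respect to $\tfrac{dA(\zeta)}{\pi(1+\abs{\zeta}^2)^2}$, together with the universal comparison $\abs{w_\xi-\zeta}\geq c\,d_{S^2}(\zeta,w_\xi)$, so that on the domain of integration $\abs{w_\xi-\zeta}\geq c\,N^{-1+\eta}$. Here $w_\xi=\xi(1-N^{-1})^{-1}$ stays in a fixed compact subset of $\C\setminus\set{\underline{0}}$ for all large $N$ since $\xi\notin S_\mu=\set{\underline 0,\infty}$; pick $R_0$ a fixed constant with $R_0\geq 2\abs{w_\xi}$ for all large $N$. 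On $\set{\abs{\zeta}\leq R_0}$ the Fubini--Study weight is bounded, and passing to polar coordinates centered at $w_\xi$ gives a contribution $\lesssim \int_{cN^{-1+\eta}}^{2R_0}\rho^{-1}\,d\rho=O(\log N)$; on $\set{\abs{\zeta}> R_0}$ one has $\abs{w_\xi-\zeta}\geq \abs{\zeta}/2$, so the integrand is $O(\abs{\zeta}^{-6})$ and the contribution is $O(1)$. Combining, $\E{Y}\leq C\cdot N\log N$, hence $\mathbb P(A\cap B)\leq C\cdot N^{-1+2\eta}\log N$ and therefore $\mathbb P(A)\leq C\cdot N^{-1+2\eta}\log N+K\cdot N^{-1+2\eta}\leq K'\cdot N^{-1+2\eta}\log N$, as asserted.

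I do not expect a genuine obstacle: the one conceptual point is that the unconditioned second moment $\E{\abs{w_\xi-\xi_j}^{-2}}$ is infinite (logarithmically divergent near $w_\xi$), which is exactly why one must truncate at the scale $N^{-1+\eta}$ and is precisely where the $\log N$ in the statement comes from — the cutoff cannot be lowered without making $\mathbb P(B^c)$ too large, nor raised without destroying the $N^{2-2\eta}$ threshold. The only routine care needed is the Euclidean-versus-spherical distance comparison near $w_\xi$ and retaining the $(1+\abs{\zeta}^2)^{-2}$ weight so the integral over the part of $S^2$ far from $w_\xi$ converges.
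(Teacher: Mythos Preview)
Your proof is correct and follows essentially the same route as the paper: condition on the event that no random zero lands within $N^{-1+\eta}$ of $w_\xi$ (whose complement has probability $O(N^{-1+2\eta})$ by Lemma~\ref{L:Number of Zeros}), then apply Markov's inequality using that the truncated expectation $\E{\sum_j \mathbf 1_{\{d_{S^2}(\xi_j,w_\xi)>N^{-1+\eta}\}}\abs{w_\xi-\xi_j}^{-2}}$ is $O(N\log N)$ via the polar-coordinate computation around $w_\xi$. Your version is in fact a bit cleaner---working with the unconditional expectation of the truncated sum $Y$ rather than the conditional expectation given $B$ sidesteps a normalization issue, and your splitting of the integral into $\{|\zeta|\le R_0\}$ and $\{|\zeta|>R_0\}$ makes the $O(\log N)$ bound more transparent---but the underlying argument is the same.
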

We now turn to the details. 
\begin{proof}[Proof of Lemma \ref{L:Number of Zeros}]
The estimate \eqref{E:SmallDisk} is true since there is a constant $K=K(\eta)>0$ so that for every $w\in S^2,$ 
\[\mathbb P\lr{\mathcal N\lr{w, N^{-1+\eta}}\geq 1}\leq \E{\mathcal N\lr{w, N^{-1+\eta}}}=K_1 \cdot N^{-1+2\eta}.\]
Next, there is a constant $K'$ so that for any $w\in S^2,$ the random variable $\mathcal N\lr{w, N^{-\frac{1}{2}+\frac{\delta}{2}}}$ has a binomial distribution with number of trials $N-1$ and success probability $p$ not exceeding $K\cdot N^{-1+\delta}.$ Therefore, 
\[Var\left[\mathcal N\lr{w, N^{-\frac{1}{2}+\frac{\delta}{2}}}\right]\leq K_2\cdot N^{\delta}.\]
Hence, by Chebyshev's inequality, 
\[\mathbb P\lr{\mathcal N\lr{w, N^{-\frac{1}{2}+\frac{\delta}{2}}}\geq N^{\delta+\kappa}}\leq K_2\cdot N^{-\delta -2\kappa},\]
as claimed. 
\end{proof}

\begin{proof}[Proof of Lemma \ref{L:Fixed Efld}]
Define the event
\[ B=\set{\mathcal N(w_\xi, N^{-1+\eta})\geq 1}.\]
Computing in polar coordinates around $w_\xi,$ we find that there exists $C>0$ satisfying
\begin{align}
\E{\sum_{j=1}^{N-1}\frac{1}{\abs{w_\xi-\xi_j}^2}~\bigg|~B}&\leq C\cdot N \int_{N^{-1+\eta}}^{\infty}\frac{dr}{r\lr{1+r^2}^2}\leq C\lr{1-\eta}\cdot N\log N.\label{E:RHS}
\end{align}
By Lemma \ref{L:Number of Zeros}, there is a constant $c=c(\eta)$ so that
\begin{align}
  \mathbb P\lr{B}\geq 1- c\cdot N^{-1+2\eta}.\label{E:Best}
\end{align}
Therefore, 
\begin{align*}
\abs{\mathbb P(A)-\mathbb P(A\,|\, B)}\leq c \cdot N^{-1+2\eta}.
\end{align*}
Combining Markov's inequality with \eqref{E:RHS}, we find that there exists $C=C(\eta)$ for which 
\[\mathbb P(A\,|\, B)\leq 2C(1-\eta)N^{-1+2\eta}\log N,\]
completing the proof. 
\end{proof}
\noindent We are ready to show \eqref{E:Goal2}. We estimate the modulus of 
\[\twiddle{E}_N(w_\xi, \delta)-\twiddle{E}_N(w, \delta)=\sum_{\abs{w_\xi-\xi_j}\leq N^{-1/2+\delta/2}}\frac{w_\xi-w}{\lr{w-\xi_j}\lr{w-\xi_j}}\]
by using the constant $c_2$ from assumption (ii), to find that for all $w\in \Gamma_N$
\begin{align*}
\abs{  \twiddle{E}_N(w, \delta)-\twiddle{E}_N(w_\xi,\delta)}&\leq c_2\cdot N^{-1} \abs{\sum_{\abs{\xi-\xi_j}\leq N^{-1/2+\delta/2}}\frac{1}{\lr{w_\xi-\xi_j}\lr{w-\xi_j}}}.
\end{align*}
Adding and subtracting $\lr{w_\xi-\xi_j}^{-2}$ inside the absolute values, we find that the right hand side of the previous line is bounded above by 
\begin{align}
c_2^2\cdot N^{-2} \abs{\sum_{\abs{\xi-\xi_j}\leq N^{-1/2+\delta/2}}\frac{1}{\lr{w_\xi-\xi_j}^2\lr{w-\xi_j}}}+ c_2\cdot N^{-1}\abs{ \sum_{\abs{\xi-\xi_j}\leq N^{-1/2+\delta/2}}\frac{1}{\lr{w_\xi-\xi_j}^2}}.\label{E:Res}
\end{align}
Continuing in this way, for every $l\geq 1,$ we may write
\begin{align}
\label{E:RHS2}  \abs{ \twiddle{E}_N(w, \delta)-\twiddle{E}_N(w_\xi,\delta)}&\leq c_2^l N^{-l}\cdot \abs{ \sum_{\abs{\xi-\xi_j}\leq N^{-1/2+\delta/2}}\frac{1}{\lr{w_\xi-\xi_j}^l\lr{w-\xi_j}}}\\
\label{E:RHS3}  &+\sum_{k=1}^{l-1} c_2^k N^{-k}\abs{\sum_{\abs{\xi-\xi_j}\leq      N^{-1/2+\delta/2}}\frac{1}{\lr{w_\xi-\xi_j}^{k+1}} }.
\end{align}
The key point is that $w$ appears only in \eqref{E:RHS2}, while \eqref{E:RHS3} involves only $w_\xi.$ Choose $l$ large enough so that
\begin{equation}
\delta<\frac{l+1}{l+5}.\label{E:ldef}
\end{equation}
Lemma \ref{L:Number of Zeros} shows that there exists $C=C(\delta)>0$ so that with probability at least $1-C\cdot N^{-\delta},$ 
\[\mathcal N\lr{w_\xi, N^{-\frac{1}{2}-\frac{\delta}{2}}}=0\qquad \text{and}\qquad N\lr{w_\xi, N^{-\frac{1}{2}+\frac{\delta}{2}}}\leq N^{2\delta}.\]
Hence, using assumptions (i) and (ii), there exists $C'=C'(c_1,c_2,\delta)$ and $C''=C''(\delta)>0$ so that with probability $1-C''\cdot N^{-\delta}$
\[N^{-l}  \sum_{\abs{w_\xi-\xi_j}\leq N^{-1/2+\delta/2}}\abs{\frac{1}{\lr{w_\xi-\xi_j}^l\lr{w-\xi_j}}}\leq C'\cdot N^{1-(l+1-2\delta - (l+1)\lr{\frac{1+\delta}{2}})}.\]
Using \eqref{E:ldef}, we have $\gamma := l+1-2\delta - (l+1)\lr{\frac{1+\delta}{2}}>0,$ which shows that the right hand side of \eqref{E:RHS2} is bounded by $N^{1-\gamma}$ with probability as least $1-C''\cdot N^{-\delta}$. To bound \eqref{E:Res}, we apply Lemma \ref{L:Fixed Efld} for some fixed $\eta\in \lr{\frac{1-\delta}{2},\frac{1}{2}}$ to find that there exists $C=C(\delta)$ and $C'=C'(c_1,c_2,\delta)$ so that
\begin{align*}
  \sum_{k=1}^{l-1}c_2^k N^{-k} \abs{ \sum_{\abs{w_\xi-\xi_j}\leq N^{-1/2+\delta/2}}\frac{1}{\lr{w_\xi-\xi_j}^{k+1}}}&\leq  \sum_{k=1}^{l=1}c_2^k N^{-k} \lr{\sum_{\abs{w_\xi-\xi_j}\leq N^{-1/2+\delta/2}}\frac{1}{\abs{ w_\xi-\xi_j}^2}}^{\frac{k+1}{2}}\\
&\leq C'\cdot \sum_{k=1}^{l-1} N^{-k + (1-\eta)(k+1)}\\
&\leq C'(l-1)\cdot N^{\delta}
\end{align*}
with probability at least $1-C\cdot N^{-\delta},$ proving that \eqref{E:Goal2} holds. Finally, we show \eqref{E:Goal3}.
Set $\eta=\frac{1}{2}\lr{1-\delta}$ and recall the event $B$ from Lemma \ref{L:Fixed Efld}.  Observe that
\[\E{\abs{\twiddle{E}_N(w_\xi)}^2\, \big| \, B}=\E{\sum_{j,k=1}^{N-1}\lr{\frac{1}{w_\xi-\xi_j}}\cdot \lr{\overline{\frac{1}{w_\xi-\xi_k}}}\, \big|\,  B}=\E{\sum_{j=1}^{N-1} \frac{1}{\abs{w_\xi-\xi_j}^2}\, \big| \, B}.\] Using \eqref{E:Best}, Markov's inequality and \eqref{E:RHS}, we have that for all $\gamma\in \lr{0,\frac{1-\delta}{2}}$ there exists $C=C(\delta),C'=C'(\gamma),C''=C''(\gamma, \delta)$ so that
\begin{align*}
  \mathbb P\lr{\abs{\twiddle{E}_N(w_\xi)}>N^{1-\gamma}}&\leq \mathbb P\lr{\abs{\twiddle{E}_N(w_\xi)}>N^{1-\gamma}\,\big|\, B}+C\cdot N^{-\delta}\\
&\leq N^{-2+2\gamma} \cdot \E{\sum_{j=1}^{N-1}\frac{1}{\abs{ w_\xi-\xi_j}^2}\, \bigg| \, B}+C\cdot N^{-\delta}\\
&=C' \cdot N^{-1+2\gamma}\log N + C \cdot N^{-\delta}\\
&\leq C'' \cdot N^{-\delta}.
\end{align*}
This completes the proof of Theorem \ref{T:SCor}.
\section{Proof of Theorem \ref{T:Main}}
Fix $\alpha,\ep,\Xi_N$ as in the statement of Theorem \ref{T:Main} as well as $\delta \in (0,\alpha)$. Fix $\und{\xi}\in \Xi_N$ and write $Z_N=p_{N,\Xi_N}^{-1}(0).$ We have 
\begin{align}
\notag  \twiddle{E}_{N,\und{\xi}}(w)&=\sum_{\xi\in Z_N\backslash \set{\und{\xi}}}\frac{1}{w-\xi}\\
&=\sum_{\xi\in \Xi_N\backslash \set{\und{\xi}}}\frac{1}{w-\xi}+ \sum_{\xi\in Z_N\backslash \Xi_N}\frac{1}{w-\xi}.\label{E:TwoTerms}
\end{align}
By the same argument as in the proof of Theorem \ref{T:SCor}, there exists $\gamma>0$ and $C=C(\delta)$ so that 
\begin{equation}
\mathbb P\lr{\sup_{w\in \Gamma_{N,\und{\xi}}}\abs{\sum_{\xi\in Z_N\backslash \Xi_N}\frac{1}{w-\xi}}>N^{1-\gamma}}\leq C\cdot N^{-\delta}.\label{E:easyterm}
\end{equation}
Note that $C$ is independent of $\und{\xi}.$ To estimate the first term in \eqref{E:TwoTerms}, note that the well-spacing assumption (A) implies that 
\begin{equation*}
  \#\,\,\Xi_N\cap \setst{w\in \C}{jN^{-1/2}<\abs{w-\und{\xi}}\leq (j+1)N^{-1/2}} \leq \frac{2j+1}{N}\cdot N^{1-\ep}=(2j+1)N^{-\ep}.
\end{equation*}
Therefore, 
\begin{align*}
\sup_{w\in \Gamma_{N,\und{\xi}}}  \sum_{\xi\in \Xi_N\backslash \set{\und{\xi}}} \frac{1}{\abs{w-\xi}} &\leq \sum_{\substack{\xi \in \Xi_N\backslash \set{\und{\xi}}\\ \abs{\xi-\und{\xi}}\leq N^{\ep/2}}}\frac{1}{\abs{w-\xi}}\,\, +\sum_{\substack{\xi \in \Xi_N\backslash \set{\und{\xi}}\\ \abs{\xi-\und{\xi}}>N^{\ep/2}}}\frac{1}{\abs{w-\xi}}\\
&\leq \sum_{j=1}^{N^{\ep/2+1/2}} (2j+1)N^{-\ep}\cdot \frac{N^{1/2}}{j}\,\, +N^{1-\ep/2}\\
&\leq 4\cdot N^{1-\ep/2}.
\end{align*}
A simple union bound now shows that there exists $C=C(\alpha,\delta,\ep)>0$ so that
\begin{equation*}
\mathbb P\lr{\sup_{\und{\xi}\in \Xi_N}\sup_{w\in \Gamma_{N,\und{\xi}}}\abs{\twiddle{E}_N(w)}> N^{1-\gamma}}\leq C\cdot N^{\alpha-\delta},
\end{equation*}
completing the proof.

\end{document}